\newtheorem{thm}{Theorem}[section]
\newtheorem{cor}[thm]{Corollary}
\newtheorem{lem}[thm]{Lemma}
\newcommand{\Ann}{\mbox{Ann}\,}
\newcommand{\Ker}{\mbox{Ker}\,}
\newcommand{\h}{\mbox{ht}\,}
\newcommand{\E}{\mbox{E}}
\newcommand{\uhom}{{\mathbf R}\Hom}
\newcommand{\utp}{\otimes^{\mathbf L}}
\newcommand{\ugamma}{{\mathbf R}\Gamma}
\renewcommand{\H}{\mbox{H}}
\newcommand{\V}{\mbox{V}}
\newcommand{\lo}{\longrightarrow}
\newcommand{\fa}{\mathfrak{a}}
\newcommand{\fb}{\mathfrak{b}}
\newcommand{\fm}{\mathfrak{m}}
\newcommand{\fp}{\mathfrak{p}}
\newcommand{\fq}{\mathfrak{q}}
\newcommand{\fc}{\mathfrak{c}}
\renewcommand{\Im}{\mbox{Im}\,}
\newcommand{\Min}{\mbox{Min}\,}
\def\Ext{\operatorname{\mathsf{Ext}}}
\def\depth{\operatorname{\mathsf{depth}}}
\def\Hom{\operatorname{\mathsf{Hom}}}
\def\dim{\operatorname{\mathsf{dim}}}
\DeclareMathOperator{\Supp}{Supp}
\DeclareMathOperator{\Spec}{Spec}
\DeclareMathOperator{\Ass}{Ass}
\newcommand{\holim@}[2]{%
  \vtop{\m@th\ialign{##\cr
    \hfil$#1\operator@font holim$\hfil\cr
    \noalign{\nointerlineskip\kern1.5\ex@}#2\cr
    \noalign{\nointerlineskip\kern-\ex@}\cr}}%
}
\newcommand{\holim}{%
  \mathop{\mathpalette\holim@{\rightarrowfill@\textstyle}}\nmlimits@
}
\def\@secnumfont{\bfseries}
\def\section{\@startsection{section}{1}%
  \z@{.7\linespacing\@plus\linespacing}{.5\linespacing}%
  {\normalfont\Large\bfseries\filcenter}}
\def\subsection{\@startsection{subsection}{2}%
  \z@{.5\linespacing\@plus.7\linespacing}{-.5em}%
  {\normalfont\large\bfseries}}
\begin{document}

\title[The derived category analogues of ...]
{The derived category analogues of Faltings' Local-global Principle and Annihilator Theorems}

\author[K. Divaani-Aazar]{Kamran Divaani-Aazar}
\author[M. Rahro Zargar]{Majid Rahro Zargar}

\address{K. Divaani-Aazar, Department of Mathematics, Alzahra University, Vanak, Post Code
19834, Tehran, Iran-and-School of Mathematics, Institute for Research in Fundamental Sciences
(IPM), P.O. Box 19395-5746, Tehran, Iran.}
\email{kdivaani@ipm.ir}

\address{Majid Rahro Zargar, Department of Engineering Sciences, Faculty of Advanced Technologies,
University of Mohaghegh Ardabili, Namin, Iran-and-School of Mathematics, Institute for Research in
Fundamental Sciences (IPM), P.O. Box: 19395-5746, Tehran, Iran.}
\email{zargar9077@gmail.com}

\subjclass[2010]{13D45;  14B15; 13D09}
\keywords{Adjusted depth; derived category; finiteness dimension; local cohomology; specialization closed set.\\
The research of the first author is supported by a grant from IPM (no. 93130212).}

\begin{abstract}
Let $\mathcal{Z}$ be a specialization closed subset of $\Spec R$ and $X$ a homologically left-bounded complex
with finitely generated homologies. We establish Faltings' Local-global Principle and Annihilator Theorems for
the local cohomology modules {{$\H_{\mathcal{Z}}^i(X).$ }} Our versions contain variations of results already
known on these theorems.
\end{abstract}

\maketitle

\section{Introduction}

Throughout, $R$ is a commutative Noetherian ring with identity. Let $\fa$ be an ideal of $R$ and $M$ a finitely generated
$R$-module. The finiteness dimension of $M$ relative to $\fa$, $f_{\fa}(M)$, is defined as the infimum of the integers $i$
such that $\H^i_{\fa}(M)$ is not finitely generated. Let $r$ be a positive integer. It is known that $\H^i_{\fa}(M)$ is
finitely generated for all $i<r$ if and only if $\fa^n\H^i_{\fa}(M)=0$ for some positive integer $n$ and all $i<r$.
Faltings' Local-global Principle Theorem \cite[Satz 1]{Fa1} asserts that the $R$-module $\H^i_{\fa}(M)$ is finitely generated
for all $i<r$ if and only if the $R_{\fp}$-module $\H^i_{\fa R_{\fp}}(M_{\fp})$ is finitely generated for all $i<r$ and for all
$\fp\in \Spec R$. Thus, $$f_{\fa}(M)=\inf \left\{i\in \mathbb{N}_0|\ \fa\nsubseteq \sqrt{(0:_R\H^i_{\fa}(M))} \right\}=\inf
\left\{f_{\fa R_{\fp}}(M_{\fp})|\ \fp\in \Spec R \right\}.$$

Now, let $\fb$ be a second ideal of $R$ such that $\fb\subseteq \fa$. The $\fb$-finiteness dimension of $M$ relative to $\fa$
is defined by $$f^{\fb}_{\fa}(M):=\inf \left\{i\in \mathbb{N}_0|\ \fb\nsubseteqq \sqrt{(0:_R\H^i_{\fa}(M))}\right\}.$$ It is
natural to ask whether Faltings' Local-global Principle generalizes for the pair $\fb\subseteq \fa$. In other words, does
$$f_{\fa}^{\fb}(M)=\inf\left\{f_{\fa R_{\fp}}^{\fb R_{\fp}}(M_{\fp})|\ \fp \in \Spec R \right\}?$$

In \cite[Corollary]{Ra}, Raghavan deduced from Faltings' Annihilator Theorem that if $R$ is a homomorphic image of a regular
ring, then the Local-global Principle holds for the pair $\fb\subseteq \fa$. The $\fb$-minimum $\fa$-adjusted depth of $M$
is defied by $$\lambda_{\fa}^{\fb}(M):=\inf \left\{\depth M_{\fp}+\h \left(\frac{\fa+\fp}{\fp}\right)|\ \fp\in \Spec R-\V(\fb)
\right\}.$$ It is always the case that $f^{\fb}_{\fa}(M)\leq \lambda_{\fa}^{\fb}(M)$. Faltings' Annihilator Theorem
\cite[Satz 1]{Fa2} states that if $R$ is a homomorphic image of a regular ring,  then $f^{\fb}_{\fa}(M)=\lambda_{\fa}^{\fb}(M)$.

In the literature, there are many generalizations of Faltings' Local-global Principle and Annihilator Theorems for ordinary local
cohomology and also for some of its generalizations; see e.g. \cite{AKS}, \cite{BRS}, \cite{Ka}, \cite{KS}, \cite{KYA} and \cite{Ra}.

It is known that all the generalizations $H_{\mathcal{Z}}^{i}(M)$, $H_{\mathcal{\mathfrak{a},\mathfrak{b}}}^{i}(M)$, and
$H_{\mathcal{\mathfrak{a}}}^{i}(M,N)$ of the local cohomology module $H_{\mathcal{\mathfrak{a}}}^{i}(M)$ of an $R$-module $M$,
are special cases of the local cohomology module $H_{\mathcal{Z}}^{i}(X)$ of a complex $X$ with support in a specialization
closed subset $\mathcal{Z}$ of $\Spec R$. For the definitions of $H_{\mathcal{\mathfrak{a},\mathfrak{b}}}^{i}(M)$ and
$H_{\mathcal{\mathfrak{a}}}^{i}(M,N)$, we refer the reader to \cite{TYY} and \cite{He}. Also, Yoshino and Yoshizawa
\cite[Theorem 2.10]{YY} have shown that for every abstract local cohomology functor $\delta$ from the category of homologically left
bounded complexes of $R$-modules to itself, there is a specialization closed subset $\mathcal{Z}$ of $\Spec R$ such that $\delta
\cong \ugamma_{\mathcal{Z}} $. Therefore, any established result on $H_{\mathcal{Z}}^{i}(X)$ encompasses all the previously known
results on each of these local cohomology modules.

Our aim in this paper is to establish Faltings' Local-global Principle and Annihilator Theorems for the local cohomology modules
{{$\H_{\mathcal{Z}}^i(X).$ }} More precisely, we prove the following theorem; see Theorems \ref{G} and \ref{M}, and Corollaries
\ref{H} and \ref{P}. To state it, we first need to fix some notation.

Let $\mathcal{Z}\subseteq \mathcal{Y}$ be two specialization closed subsets of $\Spec R$ and $X$ a homologically left
bounded complex with finitely generated homologies.  Set $$f_{\tiny{\mathcal{Z}}}^{\tiny{\mathcal{Y}}}(X):=\inf \left\{i\in
\mathbb{Z}|\ \fc \H_{\mathcal{Z}}^i(X)\neq 0\ \text{for all ideals}\ \fc \ \text{of} \ R \  \text{with} \ \V(\fc)\subseteq
\mathcal{Y}\right\}$$ and $$\lambda_{\mathcal{Z}}^{\mathcal{Y}}(X):=\inf\left\{\depth_{R_{\fq}}X_{\fq}+\h\frac{\fp}{\fq}|\ \fq\notin
\mathcal{Y} \  \ \text{and} \ \ \fp\in\mathcal{Z}\cap \V(\fq)\right\}.$$ Abbreviate $f_{\mathcal{Z}}^{\mathcal{Z}}(X)$ and
$\lambda_{\mathcal{Z}}^{\mathcal{Z}}(X),$ by $f_{\mathcal{Z}}(X)$ and $\lambda_{\mathcal{Z}}(X)$, respectively. Note that
\emph{$f_{\fa}^{\fb}(M)=f_{\tiny{{\V(\fa)}}}^{\tiny{\V(\fb)}}(M)$} and \emph{$\lambda_{\fa}^{\fb}(M)=\lambda_{\tiny{{\V(\fa)}}}^{
\tiny{\V(\fb)}}(M).$}

\begin{thm}\label{1}
Let $\mathcal{Z}\subseteq \mathcal{Y}$ be two specialization closed subsets of $\Spec R$ and $X$ a homologically left-bounded
complex with finitely generated homologies. Then the following statements hold.
\begin{itemize}
\item[(i)] \emph{{$f_{\mathcal{Z}}(X)=\inf \left\{f_{\mathcal{Z_{\fp}}}(X_{\fp})|\  \fp \in \Spec R\right\}=\inf \left\{i
\in \mathbb{Z}|\ \H_{\mathcal{Z}}^i(X) \  \text{is not finitely generated}\right\}.$}}
\item[(ii)] Assume that $X$ is homologically bounded. Then $f_{\mathcal{Z}}^{\mathcal{Y}}(X)
\leq \lambda_{\mathcal{Z}}^{\mathcal{Y}}(X)$.
\item[(iii)] Assume that $R$ is a homomorphic image of a finite-dimensional Gorenstein ring and $X$ is homologically bounded.
Then $f_{\mathcal{Z}}^{\mathcal{Y}}(X)=\lambda_{\mathcal{Z}}^{\mathcal{Y}}(X)$ and $$f_{\mathcal{Z}}^{\mathcal{Y}}(X)
=\inf \left\{f_{\mathcal{Z_{\fp}}}^{\mathcal{Y_{\fp}}}(X_{\fp})|\   \fp \in \Spec R\right\}.$$
\end{itemize}
\end{thm}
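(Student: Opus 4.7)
My plan is to lift the classical Faltings Local-global Principle and Annihilator Theorems to the derived-category setting along two axes: from modules to complexes, via induction on the homological amplitude of $X$ together with the hypercohomology spectral sequence
\[
E_2^{p,q}=\H_{\mathcal{Z}}^p(\H^q(X))\Longrightarrow \H_{\mathcal{Z}}^{p+q}(X);
\]
and from a single ideal to a specialization closed set, exploiting that \emph{any} ideal $\fc$ with $\V(\fc)\subseteq\mathcal{Z}$ plays the role that a defining ideal of the support would play classically.

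\textbf{Part (i).} First I would prove that $f_{\mathcal{Z}}(X)$ coincides with $\inf\{i\mid \H_{\mathcal{Z}}^i(X)\text{ is not finitely generated}\}$. One direction is formal: any finitely generated $\mathcal{Z}$-torsion module admits a uniform annihilator with variety in $\mathcal{Z}$, obtained by intersecting the finitely many annihilators of a generating set. The substantive direction---that a uniform $\mathcal{Z}$-annihilator $\fc$ of the low-degree $\H_{\mathcal{Z}}^i(X)$ forces finite generation---is achieved by applying classical Faltings to each finitely generated module $\H^q(X)$ with the ideal $\fc$, so that all relevant $E_2^{p,q}$ become finitely generated, whence so does the abutment by the finite filtration. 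The local-global identity $f_{\mathcal{Z}}(X)=\inf_{\fp}f_{\mathcal{Z}_{\fp}}(X_{\fp})$ is then immediate from $\H_{\mathcal{Z}}^i(X)_{\fp}\cong\H_{\mathcal{Z}_{\fp}}^i(X_{\fp})$ together with the fact that finite generation is a local property over a Noetherian ring.

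\textbf{Part (ii).} For the inequality $f_{\mathcal{Z}}^{\mathcal{Y}}(X)\le\lambda_{\mathcal{Z}}^{\mathcal{Y}}(X)$ with $X$ homologically bounded, I would reduce through the same spectral sequence to the module case. For each of the finitely many $q$ with $\H^q(X)\ne 0$, a Faltings-type inequality applied to the finitely generated module $\H^q(X)$ and the specialization closed pair $(\mathcal{Z},\mathcal{Y})$ yields an ideal $\fc_q$ with $\V(\fc_q)\subseteq\mathcal{Y}$ annihilating $\H_{\mathcal{Z}}^p(\H^q(X))$ for $p$ below a suitably shifted bound. The key bookkeeping is to match the depth data on $X_{\fq}$ appearing in $\lambda_{\mathcal{Z}}^{\mathcal{Y}}(X)$ with the depth data on $\H^q(X)_{\fq}$, via standard relations between the depth of a complex and the depths of its homologies. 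The intersection $\fc=\bigcap_q\fc_q$ still has $\V(\fc)\subseteq\mathcal{Y}$, and a sufficiently large power of $\fc$ annihilates $\H_{\mathcal{Z}}^n(X)$ for $n<\lambda_{\mathcal{Z}}^{\mathcal{Y}}(X)$ because the induced filtration on the abutment has finite length.

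\textbf{Part (iii).} The deep input is the equality $f_{\mathcal{Z}}^{\mathcal{Y}}(M)=\lambda_{\mathcal{Z}}^{\mathcal{Y}}(M)$ at the module level under the Gorenstein assumption. I would adapt Faltings' original strategy via local duality over the Gorenstein cover $S\twoheadrightarrow R$: the modules $\H_{\mathcal{Z}}^i(M)$ are related, through local duality, to certain Ext-modules over $S$ whose annihilators are directly controlled by the depth data defining $\lambda_{\mathcal{Z}}^{\mathcal{Y}}(M)$. Combining this module equality with (ii) and the spectral sequence propagates it to bounded $X$. The local-global assertion in (iii) is then a consequence: $\lambda$ is tautologically a local-global infimum (set $\fp=\fq$ in its definition), so the equality $f=\lambda$ locally and globally transfers the local-global property to $f$. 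The main obstacle I anticipate is the transition from pairs of ordinary ideals $(\fa,\fb)$ to pairs of specialization closed subsets $(\mathcal{Z},\mathcal{Y})$, which requires patching local $\mathcal{Y}_{\fp}$-annihilators into a single global $\mathcal{Y}$-annihilator; the Noetherian finiteness of the associated primes of the finitely generated cohomology modules appearing at low $E_2$-positions should make this bookkeeping tractable.
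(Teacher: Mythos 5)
Your plan rests on the hypercohomology spectral sequence $\H_{\mathcal{Z}}^p(\H^q(X))\Rightarrow\H_{\mathcal{Z}}^{p+q}(X)$ to reduce from complexes to modules, but this spectral sequence runs the wrong way for most of what you need: the hypotheses in both Faltings theorems are annihilation or finite-generation conditions on the \emph{abutment} $\H_{\mathcal{Z}}^i(X)$, and there is no way to propagate such conditions back to the $E_2$-terms $\H_{\mathcal{Z}}^p(\H^q(X))$ (only $E_\infty$ is a subquotient of $E_2$, not conversely). The paper avoids this by working directly at the complex level: for Part (i) it uses the $\Ext$-type spectral sequence $\Ext_R^p(R/\fa,\H_{\mathcal{Z}}^q(X))\Rightarrow\Ext_R^{p+q}(R/\fa,X)$, whose abutment is automatically finitely generated, in a one-step bootstrap (Lemma~\ref{C}) feeding an induction on $t$ (Corollary~\ref{D}, Theorem~\ref{G}); for Parts (ii)--(iii) it reduces to the Gorenstein local case by flat base change (Corollary~\ref{E}, Lemma~\ref{B}), then applies Local Duality together with the composite-functor spectral sequence $\H_{\fa}^p(\H_{\mathcal{Z}}^q(X))\Rightarrow\H_{\fa}^{p+q}(X)$ (Lemma~\ref{L}) and a filtration of $\mathcal{Z}$ by height (Corollary~\ref{K}).

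Three further concrete problems. First, in Part (i) you invoke ``the fact that finite generation is a local property over a Noetherian ring''; this is false (e.g. $\bigoplus_{p}\mathbb{Z}/p\mathbb{Z}$ has all localizations finitely generated over $\mathbb{Z}_{(p)}$ but is not finitely generated over $\mathbb{Z}$). The local-global identity $f_{\mathcal{Z}}(X)=\inf_{\fp}f_{\mathcal{Z}_{\fp}}(X_{\fp})$ is exactly the substantive content of Theorem~\ref{G}, and the paper earns it with a patching argument: local annihilators $\fa_{\fp}$ are combined into a single $\fa\in F(\mathcal{Z})$ using Lemma~\ref{F1} and the quasi-compactness of $\Spec R$, after which Lemma~\ref{C} gives finite generation. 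Second, in Part (ii) you claim that ``a Faltings-type inequality'' at the module level produces an ideal $\fc_q\in F(\mathcal{Y})$ annihilating $\H_{\mathcal{Z}}^p(\H^q(X))$ for $p$ below a shifted bound; producing such annihilators from depth data is the \emph{Gorenstein} direction $f\geq\lambda$, whereas Part (ii) is supposed to hold with no Gorenstein hypothesis -- you have the two directions crossed. Third, the ``depth bookkeeping'' needs the inequality $\depth_{R_{\fq}}\H_{q}(X)_{\fq}\geq\depth_{R_{\fq}}X_{\fq}+q$ for all $q$, equivalently equality in $\depth X\geq\inf_q\{\depth\H_q(X)-q\}$; this can fail for general bounded complexes, so the shift of the $\lambda$-bound from $X$ to its homologies is not available. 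Your observation that $\lambda$ is tautologically a local-global infimum is correct and is indeed how Corollary~\ref{P}(ii) is obtained once $f=\lambda$ is known, but the route you propose for $f=\lambda$ itself does not close.
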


\section{Prerequisites}

The derived category of $R$-modules is denoted by $\mathrm{D}(R)$. Simply put, an object in $\mathcal{D}(R)$ is an $R$-complex $X$ displayed
in the standard homological style $$X= \cdots \rightarrow X_{i+1} \xrightarrow {\partial^{X}_{i+1}} X_{i} \xrightarrow {\partial^{X}_{i}} X_{i-1}
\rightarrow \cdots.$$ We use the symbol $\simeq$ for denoting isomorphisms in $\mathrm{D}(R)$. We denote the full subcategory of homologically
left-bounded complexes by $\mathrm{D}_{\sqsubset}(R)$. Also, we denote the full subcategory of complexes with finitely generated homology modules
that are homologically bounded (resp. homologically left-bounded) by $\mathrm{D}_{\Box}^f(R)$ (resp. $\mathrm{D}_{\sqsubset}^f(R)$).  Given an
$R$-complex $X$, the standard notion $$\sup X=\sup \left\{i \in \mathbb{Z}| \ \H_{i}(X) \neq 0 \right\}$$
is frequently used, with the convention that $\sup \emptyset=-\infty$.

Let $\fa$ be an ideal of $R$ and $X\in \mathrm{D}_{\sqsubset}(R)$. A subset $\mathcal{Z}$ of $\Spec R$ is said
to be {\it specialization closed} if $\V(\fp)\subseteq \mathcal{Z}$ for all $\fp \in \mathcal{Z}$. For every
$R$-module $M$, set $\Gamma_{\mathcal{Z}}(M):=\left\{x\in M|~\Supp_RRx\subseteq \mathcal{Z}\right\}.$ The right
derived functor
of the functor $\Gamma_{\mathcal{Z}}(-)$ in $\mathrm{D}(R)$, $\ugamma_{
\mathcal{Z}}(X)$, exists and is defined by ${\bf R} \Gamma_{\mathcal{Z}}(X):=\Gamma_{\mathcal{Z}}(I)$, where $I$
is any injective resolution of $X$.  Also, for every integer $i$, the $i$-th local cohomology module of $X$ with
respect to $\mathcal{Z}$ is defined by $\H_{\mathcal{Z}}^i(X):=\H_{-i}({\bf R}\Gamma_{\mathcal{Z}}(X))$. To
comply with the usual notation, for $\mathcal{Z}:=\V(\fa)$, we denote $\ugamma_{\mathcal{Z}} (-)$ and $\H_{
\mathcal{Z}}^i(-)$ by ${\bf R}\Gamma_{{\fa}}(-)$ and $\H_{\fa}^i(-)$, respectively. Denote the set of all ideals
$\fb$ of $R$ such that $\V(\fb)\subseteq \mathcal{Z}$ by $F(\mathcal{Z})$. Since for every $R$-module $M,$ $\Gamma_{
\mathcal{Z}}(M)=\bigcup_{\fb\in F(\mathcal{Z})}\Gamma_{\fb}(M)$, for every integer $i$, one can easily check that
$$\H_{\mathcal{Z}}^i(X)\cong \underset{\fb\in F(\mathcal{Z})}\varinjlim\H_{\fb}^i(X).$$

Recall that $\Supp_RX:=\underset{l\in \mathbb{Z}}\bigcup\Supp_R\H_l(X)$ and $$\depth(\fa,X):=-\sup {\bf R}\Hom_R(R/\fa,X).$$ By
\cite[Theorem 6.2]{Iy}, it is known that $$\depth(\fa,X)=\inf\left\{i\in \mathbb{Z}|\  \H_{\fa}^i(X)\neq 0\right\}.$$ When $R$
is local with maximal ideal $\fm$, $\depth(\fm,X)$ is simply denoted by $\depth_RX$.  For every prime ideal $\fp$ of $R$
and every integer $i$, the i-th Bass number $\mu^i(\fp,X)$ is defined to be the dimension of the $R_{\fp}/\fp R_{\fp}$-vector
space $\H_{-i}(\uhom_{R_{\fp}}(R_{\fp}/\fp R_{\fp},X_{\fp}))$.

\section{Local-global Principle Theorem}

The following easy observation will be very useful in the rest of the paper.

\begin{lem}\label{A} Let $\mathcal{Z}$ be a specialization closed subset of $\Spec R$ and $X\in\mathrm{D}_{
\sqsubset}(R).$ Then the following statements hold.
\begin{itemize}
\item[(i)] \emph{$\H_{\mathcal{Z}}^i(X)=0$} for all $i<-\sup X$ and \emph{$\H_{\mathcal{Z}}^{-\sup X}(X)$} is a
finitely generated $R$-module whenever $X\in\mathrm{D}_{\sqsubset}^f(R)$.
\item[(ii)]  \emph{$\Supp_R\H_{\mathcal{Z}}^i(X)\subseteq \mathcal{Z}$} for every integer $i$.
\item[(iii)] If \emph{$\Supp_RX\subseteq  \mathcal{Z}$,} then \emph{$\ugamma_{\mathcal{Z}} (X)\simeq X$}.
\end{itemize}
\end{lem}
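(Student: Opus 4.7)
My plan is to work throughout with a semi-injective resolution $I \xrightarrow{\simeq} X$, which exists for any $X \in \mathrm{D}_{\sqsubset}(R)$ and can be chosen so that $I_i = 0$ for $i > s := \sup X$. Then $\ugamma_{\mathcal{Z}}(X) \simeq \Gamma_{\mathcal{Z}}(I)$, and I will freely use the identity $\Gamma_{\mathcal{Z}}(M) = \bigcup_{\fb \in F(\mathcal{Z})} \Gamma_{\fb}(M)$ recalled just above the lemma.

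For (i), since $\Gamma_{\mathcal{Z}}(I)_i$ is a submodule of $I_i = 0$ for $i > s$, the complex $\Gamma_{\mathcal{Z}}(I)$ is concentrated in degrees $\leq s$; translating through $\H_{\mathcal{Z}}^i(X) = \H_{-i}(\Gamma_{\mathcal{Z}}(I))$ gives the vanishing for $i < -s$. In top degree, $I_{s+1} = 0$ forces $\H_s(I) = \Ker \partial^I_s$, and the left-exactness of $\Gamma_{\mathcal{Z}}$ yields $\H_s(\Gamma_{\mathcal{Z}}(I)) \cong \Gamma_{\mathcal{Z}}(\Ker \partial^I_s) = \Gamma_{\mathcal{Z}}(\H_s(X))$. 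Thus $\H_{\mathcal{Z}}^{-s}(X) \cong \Gamma_{\mathcal{Z}}(\H_s(X))$, and when $X \in \mathrm{D}_{\sqsubset}^f(R)$ this is a submodule of the finitely generated $R$-module $\H_s(X)$ over a Noetherian ring, hence finitely generated.

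For (ii), every element of $\Gamma_{\fb}(I_j)$ is annihilated by a power of $\fb$ and therefore has support in $\V(\fb) \subseteq \mathcal{Z}$; this pushes through the union defining $\Gamma_{\mathcal{Z}}(I_j)$ and then descends to homological subquotients. For (iii), I would upgrade $I$ to a minimal semi-injective resolution of $X$, so that each term decomposes as $I_{-j} = \bigoplus_{\fp \in \Spec R} E(R/\fp)^{(\mu^j(\fp,X))}$. The hypothesis $\Supp_R X \subseteq \mathcal{Z}$ forces $\mu^j(\fp, X) = 0$ for $\fp \notin \mathcal{Z}$ (since $\fp \notin \Supp_R X$ gives $X_{\fp} \simeq 0$), so every indecomposable summand appearing in $I$ is of the form $E(R/\fp)$ with $\fp \in \mathcal{Z}$. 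A direct check shows $\Gamma_{\fb}(E(R/\fp)) = E(R/\fp)$ if $\fb \subseteq \fp$ and vanishes otherwise, whence $\Gamma_{\mathcal{Z}}(E(R/\fp)) = E(R/\fp)$ for any $\fp \in \mathcal{Z}$. Consequently $\Gamma_{\mathcal{Z}}(I) = I$, and so $\ugamma_{\mathcal{Z}}(X) \simeq X$.

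The only point that genuinely needs care is in (iii): it leans on the existence of a minimal semi-injective resolution of a homologically left-bounded complex and the standard identification of its multiplicities with Bass numbers, together with the vanishing $\mu^j(\fp, X) = 0$ outside the support. These are classical facts that I would cite rather than re-derive.
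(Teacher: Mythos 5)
Your proposal is correct and follows essentially the same route as the paper's: part (i) via an injective resolution vanishing above $\sup X$, part (ii) by inspecting supports of the terms $\Gamma_{\mathcal{Z}}(I_j)$, and part (iii) via the Bass-number decomposition of a minimal semi-injective resolution and the computation of $\Gamma_{\mathcal{Z}}(\E(R/\fp))$. Your version of (i) is in fact slightly sharper — you identify $\H_{\mathcal{Z}}^{-\sup X}(X) \cong \Gamma_{\mathcal{Z}}(\H_{\sup X}(X))$ rather than merely exhibiting it as a submodule of $\H_{\sup X}(X)$ as the paper does — but this is a cosmetic refinement, not a different argument.
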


\begin{proof} (i) Let $s:=\sup X$. By \cite[Theorem A.3.2 (I)]{Ch}, $X$ possesses an injective resolution $I$ such that
$I_i=0$ for all $i>s$. As for every integer $i$, one has $\H_{\mathcal{Z}}^i(X)=\H_{-i}(\Gamma_{\mathcal{Z}} (I))$, it
follows that $\H_{\mathcal{Z}}^i(X)=0$ for all $i<-s$ and that $\H_{\mathcal{Z}}^{-s}(X)$ is a submodule of $\H_{s}(I)$.
If $X\in\mathrm{D}_{\sqsubset}^f(R)$, then $\H_{s}(X)$ is finitely generated, and so $\H_{\mathcal{Z}}^{-s}(X)$ is finitely
generated too.

(ii) the proof is easy and we leave it to the reader.

(iii) Assume that \emph{$\Supp_RX\subseteq  \mathcal{Z}$.} For every prime ideal $\fp$  of $R$, one can check that
$$\Gamma_{\mathcal{Z}}(\E(R/\fp))=\begin{cases} \E(R/\fp) & ,\fp\in \mathcal{Z}\\ 0& ,\fp\notin \mathcal{Z}.
\end{cases}$$
By \cite[Lemma 2.3 (a) and Proposition 3.18]{Fo}, $X$ possesses an injective resolution $I$ such that $$I_i \cong \underset{\fp\in
\Supp_RX}\bigoplus\E(R/\fp)^{(\mu^i(\fp,X))}$$ for all integers $i$. Thus, $\Gamma_{\mathcal{Z}}(I_i)\cong I_i$ for all
integers $i$, and so $$\ugamma_{\mathcal{Z}} (X)\simeq \Gamma_{\mathcal{Z}}(I)\simeq I\simeq X.$$
\end{proof}

The following result plays an essential role in the proof of the derived category analogue of Falting's Local-global
Principle Theorem.

\begin{lem}\label{C} Let $\mathcal{Z}$ be a specialization closed  subset of $\Spec R$ and $X\in\mathrm{D}_{
\sqsubset}^f(R).$ Let $t$ be an integer such that \emph{$\H_{\mathcal{Z}}^i(X)$} is a finitely generated $R$-module
for all $i<t$. Then for every $\fa\in F(\mathcal{Z})$, the $R$-module \emph{$\Hom_R(R/\fa,\H_{\mathcal{Z}}^t(X))$} is
finitely generated.
\end{lem}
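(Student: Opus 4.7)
The plan is to analyse the edge column of a hyper-Ext spectral sequence converging to $\Ext^*_R(R/\fa,\ugamma_{\mathcal{Z}}X)$, after first identifying this abutment with $\Ext^*_R(R/\fa,X)$. Since $\fa\in F(\mathcal{Z})$ means $\V(\fa)\subseteq\mathcal{Z}$, the set-theoretic identity $\Gamma_{\fa}\circ\Gamma_{\mathcal{Z}}=\Gamma_{\fa}$ holds on $R$-modules, and $\Gamma_{\mathcal{Z}}$ preserves injectives by the formula for $\Gamma_{\mathcal{Z}}(\E(R/\fp))$ noted in the proof of Lemma \ref{A}(iii). Hence $\ugamma_{\fa}\simeq\ugamma_{\fa}\circ\ugamma_{\mathcal{Z}}$, and combined with the derived torsion identity $\uhom(R/\fa,Z)\simeq\uhom(R/\fa,\ugamma_{\fa}Z)$ (valid for every $Z$ because $R/\fa$ is $\fa$-torsion) this yields $\uhom(R/\fa,X)\simeq\uhom(R/\fa,\ugamma_{\mathcal{Z}}X)$. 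Because $X\in\mathrm{D}_{\sqsubset}^{f}(R)$ and $R/\fa$ admits a bounded-above resolution by finitely generated projectives, $\Ext^{n}_R(R/\fa,X)$ is a finitely generated $R$-module for every $n$.

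Setting $s=\sup X$, Lemma \ref{A}(i) gives $\H^q_{\mathcal{Z}}(X)=0$ for $q<-s$, so $\ugamma_{\mathcal{Z}}X$ is cohomologically bounded below and the standard hyper-Ext spectral sequence
$$E_2^{p,q}=\Ext^p_R\bigl(R/\fa,\H^q_{\mathcal{Z}}(X)\bigr)\;\Longrightarrow\;\Ext^{p+q}_R(R/\fa,\ugamma_{\mathcal{Z}}X)\;\cong\;\Ext^{p+q}_R(R/\fa,X)$$
lives in the region $p\geq 0$, $q\geq -s$ and converges strongly in each total degree. Now focus on the edge term $E_2^{0,t}=\Hom_R(R/\fa,\H^t_{\mathcal{Z}}(X))$: there are no nonzero incoming differentials, which gives a descending chain
$$E_2^{0,t}\supseteq E_3^{0,t}\supseteq\cdots\supseteq E_{\infty}^{0,t},$$
whose successive quotients $E_r^{0,t}/E_{r+1}^{0,t}$ embed into $E_r^{r,t-r+1}$ for $r\geq 2$. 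The latter is a subquotient of $E_2^{r,t-r+1}=\Ext^{r}_R(R/\fa,\H^{t-r+1}_{\mathcal{Z}}(X))$; for $2\leq r\leq t+s+1$ the inequality $t-r+1<t$ makes $\H^{t-r+1}_{\mathcal{Z}}(X)$ finitely generated by hypothesis, and for $r>t+s+1$ it vanishes by Lemma \ref{A}(i). Thus the filtration is finite with finitely generated successive quotients, and since $E_{\infty}^{0,t}$ is a subquotient of the finitely generated module $\Ext^t_R(R/\fa,X)$, assembling the finitely many layers forces $\Hom_R(R/\fa,\H^t_{\mathcal{Z}}(X))$ to be finitely generated.

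The main technical obstacle is setting up the hyper-Ext spectral sequence and the derived torsion isomorphism in this generality, where $\ugamma_{\mathcal{Z}}X$ is only cohomologically bounded below rather than bounded. Both, however, reduce to standard manipulations with a K-injective (or Cartan--Eilenberg) resolution of $\ugamma_{\mathcal{Z}}X$ paired with a bounded-above resolution of $R/\fa$ by finitely generated projectives; once this is in place the rest of the argument is purely combinatorial bookkeeping of the edge column.
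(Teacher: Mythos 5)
Your proposal is correct and follows essentially the same approach as the paper: identify $\uhom_R(R/\fa,X)\simeq\uhom_R(R/\fa,\ugamma_{\mathcal{Z}}X)$ via the torsion identity, invoke the first-quadrant hyper-Ext spectral sequence $\Ext^p_R(R/\fa,\H^q_{\mathcal{Z}}(X))\Rightarrow\Ext^{p+q}_R(R/\fa,X)$, and analyse the edge column $E_r^{0,t}$. The paper phrases the final step as a descending induction on the page $r$ beginning at $r=t+2$ after normalizing $\sup X=0$, while you package the identical information as a finite filtration of $E_2^{0,t}$ with finitely generated layers terminating in the finitely generated module $E_{\infty}^{0,t}$; these are the same argument.
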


\begin{proof} By Lemma \ref{A} (i), $\H_{\mathcal{Z}}^i(X)=0$ for all $i<-\sup X$ and $\H_{\mathcal{Z}}^{-\sup X}(X)$ is
finitely generated. So, we may assume that $-\sup X<t$. Set $T:=\Sigma^{-\sup X}X$ and note that $\H_{\mathcal{Z}}^i(X)
\cong \H_{\mathcal{Z}}^{i+\sup X}(T)$ for all integers $i$. So, by replacing $X$ with $T$, we may and do assume that
$\sup X=0$ and $0<t$. Then, there exists an injective resolution $I$ of $X$ such that $I_{l}=0$ for all $l>0$.

Now, let $\fa\in F(\mathcal{Z})$ and $P$ be a projective resolution of $R/\fa$. Set $M_{p,q}:=\Hom_{R}(P_{-p}, \Gamma_{
\mathcal{Z}}(I_{q}))$. Hence $\mathcal{M}:=\left\{M_{p,q}\right\}$ is a third quadrant bicomplex, and so the complex
$\Hom_{R}(P,\Gamma_{\mathcal{Z}}(I))$ is the total complex of $\mathcal{M}$.

For every $R$-module $M$, one has $\Gamma_{\fa}(\Gamma_{\mathcal{Z}}(M))=\Gamma_{\fa}(M)$. So, the two complexes
$\Gamma_{\fa}(\Gamma_{\mathcal{Z}}(I))$ and $\Gamma_{\fa}(I)$ are the same. By \cite[Propositon 3.2.2]{Li}, for any two
complexes $X_1, X_2\in \mathrm{D}_{\sqsubset}(R)$ with $\Supp_RX_1\subseteq \V(\fa),$  one has an isomorphism $$\uhom_{R}(X_1,X_2)
\simeq \uhom_{R}(X_1,\ugamma_{\fa}(X_2))$$ in $\mathrm{D}(R)$. This yields $\dag$ and $\ddag$ in the following display
of isomorphisms in $\mathrm{D}(R)$:

\[\begin{array}{rl}
\Hom_{R}(P,\Gamma_{\mathcal{Z}}(I))&\simeq \uhom_{R}(R/\fa,\Gamma_{\mathcal{Z}}(I))\\
&\overset{\dag}\simeq \uhom_{R}(R/\fa,\ugamma_{\fa}(\Gamma_{\mathcal{Z}}(I)))\\
&\simeq \uhom_{R}(R/\fa,\Gamma_{\fa}(\Gamma_{\mathcal{Z}}(I)))\\
&\simeq \uhom_{R}(R/\fa,\Gamma_{\fa}(I))\\
&\simeq \uhom_{R}(R/\fa,\ugamma_{\fa}(I))\\
&\overset{\ddag}\simeq \uhom_{R}(R/\fa,I).
\end{array}\]
Thus, there is a first quadrant spectral sequence $$\E_{2}^{p,q}:=\Ext_{R}^p(R/\fa,\H_{\mathcal{Z}}^q(X))\underset{p}
\Longrightarrow\Ext_{R}^{p+q}(R/\fa,X).$$ Note that $\Ext_{R}^{i}(R/\fa,X)$ is finitely generated for all integers
$i$. For each $r\geq2$, let $$Z_{r}^{0,t}:=\Ker(\E_{r}^{0,t}\longrightarrow\E_{r}^{r,t+1-r})$$ and $$B_{r}^{0,t}:=
\Im(\E_{r}^{-r,t+r-1}\longrightarrow\E_{r}^{0,t}).$$ As $\E_{r}^{p,q}$ is a subquotient of $\E_{2}^{p,q}$, it follows
that $\E_{r}^{-r,t+r-1}=0$ and $\E_{r}^{r,t+1-r}$ is finitely generated. Hence $$\E_{r+1}^{0,t}=\frac{Z_{r}^{0,t}}{B_{r}^{0,t}}
\cong Z_{r}^{0,t}$$ and $\E_{r}^{0,t}/Z_{r}^{0,t}$ is finitely generated. Thus $\E_{r}^{0,t}$ is a finitely generated
$R$-module if and only if $\E_{r+1}^{0,t}$ is a finitely generated $R$-module. Now, we claim that $\E_{r}^{0,t}$ is
a finitely generated $R$-module for all $r\geq 2$. To this end, we use descending induction on $r$. Let $r\geq t+2$.
Then one can use the fact that $\sup X=0$ to deduce that $\E_{r}^{-r,t+r-1}=\E_{r}^{r,t+1-r}=0$, and so $\E_{t+2}^{0,t}
\cong\ldots\cong\E_{\infty}^{0,t}$. Now, consider the following filtration $$\left\{0\right\}=\Psi_{t+1}\H^{t}\subseteq \Psi_{t}
\H^{t}\subseteq \cdots\subseteq\Psi_{1}\H^t\subseteq \Psi_{0}\H^t=\H^t,$$ where $\H^t:=\Ext_{R}^t(R/\fa,X)$ and
$\E^{p,t-p}_{\infty}=\frac{\Psi_{p}\H^t}{\Psi_{p+1}\H^t}$, to see that $\E_{t+2}^{0,t}$ is finitely generated. Suppose
that the result has been proved for all $2<r\leq t+2$. Then we want to show that the result holds for $r-1$. Notice
that $2\leq r-1$, and so by the above argument and inductive hypothesis one can see that $\E_{r-1}^{0,t}$ is finitely
generated. It therefore follows that $\Hom_{R}(R/\fa,\H_{\mathcal{Z}}^t(X))$ is a finitely generated $R$-module.
\end{proof}

Next, we record the following immediate consequence.

\begin{cor}\label{D} Let $\mathcal{Z}$ be a specialization closed subset of $\Spec R$ and $X\in\mathrm{D}_{\sqsubset}^f(R)$.
Then for every integer $t$, the following statements are equivalent:
\begin{itemize}
\item[(i)]\emph{{$\H_{\mathcal{Z}}^i(X)$ is a finitely generated $R$-module for all $i<t$.}}
\item[(ii)]\emph{{There exists an ideal $\fa \in F(\mathcal{Z})$ such that $\fa\H_{\mathcal{Z}}^i(X)=0$ for all $i<t$.}}
\end{itemize}
\end{cor}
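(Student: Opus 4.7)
The plan is to prove the two implications separately. The forward direction (i) $\Rightarrow$ (ii) will be a direct consequence of Lemma \ref{A}, while the more substantive direction (ii) $\Rightarrow$ (i) will be handled by an induction on the cohomological degree, with Lemma \ref{C} doing all the work.

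For (i) $\Rightarrow$ (ii), first note that by Lemma \ref{A}(i) one has $\H_{\mathcal{Z}}^i(X)=0$ for all $i<-\sup X$, so only finitely many indices $i$ with $-\sup X\leq i<t$ are relevant (the case $\sup X=-\infty$ being trivial). For each such $i$, the module $\H_{\mathcal{Z}}^i(X)$ is finitely generated by hypothesis and satisfies $\Supp_R\H_{\mathcal{Z}}^i(X)\subseteq\mathcal{Z}$ by Lemma \ref{A}(ii). Since the support of a finitely generated module equals $\V(\Ann_R\H_{\mathcal{Z}}^i(X))$, this gives $\Ann_R\H_{\mathcal{Z}}^i(X)\in F(\mathcal{Z})$. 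Taking $\fa$ to be the (finite) intersection of these annihilators and using $\V(\fa)=\bigcup_i\V(\Ann_R\H_{\mathcal{Z}}^i(X))\subseteq\mathcal{Z}$, one obtains an ideal $\fa\in F(\mathcal{Z})$ which annihilates $\H_{\mathcal{Z}}^i(X)$ for all $i<t$.

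For (ii) $\Rightarrow$ (i), I would proceed by induction on $i$ in the range $i<t$. The base step is immediate since $\H_{\mathcal{Z}}^i(X)=0$ for $i<-\sup X$. For the inductive step, suppose $\H_{\mathcal{Z}}^j(X)$ is finitely generated for every $j<i$, where $i<t$. Lemma \ref{C}, applied with $t$ replaced by $i$, then ensures that $\Hom_R(R/\fa,\H_{\mathcal{Z}}^i(X))$ is finitely generated. But the hypothesis $\fa\H_{\mathcal{Z}}^i(X)=0$ forces
\[
\Hom_R(R/\fa,\H_{\mathcal{Z}}^i(X))=\H_{\mathcal{Z}}^i(X),
\]
so $\H_{\mathcal{Z}}^i(X)$ is finitely generated, closing the induction.

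There is essentially no real obstacle here, since the technical heart of the argument is Lemma \ref{C}; the only small point to be careful about is verifying that the intersection used in (i) $\Rightarrow$ (ii) lies in $F(\mathcal{Z})$, which is immediate from the finiteness of the index set and the identity $\V(\fa\cap\fb)=\V(\fa)\cup\V(\fb)$.
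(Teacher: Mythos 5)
Your proposal is correct and follows essentially the same route as the paper: the forward direction takes the annihilators of the finitely many nonzero modules $\H_{\mathcal{Z}}^i(X)$ (the paper uses their product, you use their intersection, which is equivalent since both have the same radical and hence the same vanishing locus), and the backward direction is the identical induction driven by Lemma~\ref{C} together with the observation that $\fa\H_{\mathcal{Z}}^i(X)=0$ forces $\Hom_R(R/\fa,\H_{\mathcal{Z}}^i(X))\cong\H_{\mathcal{Z}}^i(X)$.
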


\begin{proof} (i)$\Rightarrow$(ii) For each $i<t$, set $\fa_{i}:=(0:_R \H_{\mathcal{Z}}^i(X))$ and note that Lemma \ref{A} (ii)
implies that $\fa_{i}\in F(\mathcal{Z})$. Now, the ideal $\fa:=\prod_{i=-\sup X}^{t-1}\fa_{i}$ belongs to $F(\mathcal{Z})$ and
$\fa \H_{\mathcal{Z}}^i(X)=0$ for all $i<t$.

(ii)$\Rightarrow$(i) We may and do assume that $t\geq 1-\sup X$ and proceed by induction on $t$. If $t=1-\sup X$, then by Lemma
\ref{A} (i) we see that $\H_{\mathcal{Z}}^{i}(X)$ is a finitely generated $R$-module for all $i<t$. Let $t>1-\sup X$ and suppose
that the result has been proved for $t-1$. Now by the induction hypothesis, $\H_{\mathcal{Z}}^i(X)$ is a finitely generated
$R$-module for all $i<t-1$, and so by Lemma \ref{C} the $R$-module $\Hom_R(R/\fa,\H_{\mathcal{Z}}^{t-1}(X))$ is finitely generated.
But, by our assumption $\fa \H_{\mathcal{Z}}^{t-1}(X)=0$, and so  $\Hom_R(R/\fa,\H_{\mathcal{Z}}^{t-1}(X))\cong \H_{\mathcal{Z}}^{
t-1}(X)$.
\end{proof}

Let $T$ be a second commutative Noetherian ring with identity and $f:R\lo T$ be a ring homomorphism. Let $\mathcal{Z}$
be a specialization closed subset of $\Spec R$. Then it is easy to check that $\mathcal{Z}^f:=\left\{\fq \in \Spec T|
\  f^{-1}(\fq)\in \mathcal{Z}\right\}$ is a specialization closed subset of $\Spec T$.

\begin{lem}\label{B} Let $f:R\lo T$ be a ring homomorphism and $\mathcal{Z}$ a specialization closed subset of
$\Spec R$. Let $X\in \mathrm{D}_{\sqsubset}(R)$ and $Y\in \mathrm{D}_{\sqsubset}(T)$. Then the following statements hold.
\begin{itemize}
\item[(i)] There is a natural $R$-isomorphism \emph{$\H_{\mathcal{Z}}^i(Y)\cong \H_{\mathcal{Z}^f}^i(Y)$} for all integers $i$.
\item[(ii)] Suppose that $T$ is flat as an $R$-module.  There is a natural $T$-isomorphism \emph{$\H_{\mathcal{Z}}^i(X)\otimes_RT
\cong \H_{\mathcal{Z}^f}^i(X\otimes_RT)$} for all integers $i$.
\end{itemize}
\end{lem}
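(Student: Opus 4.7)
The strategy for both parts is to reduce to a module-level identity and then transport it through the derived category via $T$-injective resolutions and flat base change.

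\textbf{Part (i).} The key observation is that for any $T$-module $N$, the $R$-submodules $\Gamma_{\mathcal{Z}}(N)$ and $\Gamma_{\mathcal{Z}^f}(N)$ coincide. Indeed, for $y \in N$, the containment $\Supp_R(Ry) \subseteq \mathcal{Z}$ is equivalent to $\Supp_T(Ty) \subseteq \mathcal{Z}^f$: the forward direction follows because every $\fq \in \Supp_T(Ty)$ satisfies $f^{-1}(\fq) \in \Supp_R(Ry)$, and the reverse direction exploits the localization fact that every $\fp \in \Supp_R(Ry) \subseteq \Supp_R(Ty)$ dominates some $f^{-1}(\fq)$ with $\fq \in \Supp_T(Ty) \subseteq \mathcal{Z}^f$, combined with the specialization closure of $\mathcal{Z}$. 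To lift this equality to the derived level, I would choose a $T$-injective resolution $J \xrightarrow{\simeq} Y$, which is automatically a homologically left-bounded $R$-complex. For any $T$-injective module $E$ and any $\fa \in F(\mathcal{Z})$ generated by $x_1, \ldots, x_n$, the \v{C}ech complex on $x_1, \ldots, x_n$ over $R$ applied to $E$ agrees on the nose with the \v{C}ech complex on $f(x_1), \ldots, f(x_n)$ over $T$ applied to $E$. Hence $\H^i_{\fa}(E) \cong \H^i_{\fa T}(E) = 0$ for $i > 0$, so passing to the colimit over $F(\mathcal{Z})$ gives $\H^i_{\mathcal{Z}}(E) = 0$ for $i > 0$; that is, $J$ is a complex of $\Gamma_{\mathcal{Z}}$-acyclic modules. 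Consequently $\ugamma_{\mathcal{Z}}(Y) \simeq \Gamma_{\mathcal{Z}}(J)$ in $\mathrm{D}(R)$, while $\Gamma_{\mathcal{Z}^f}(J)$ represents $\ugamma_{\mathcal{Z}^f}(Y)$ by definition. The module-level identity gives $\Gamma_{\mathcal{Z}}(J) = \Gamma_{\mathcal{Z}^f}(J)$ termwise, and taking $-i$-th homology delivers the claimed natural $R$-isomorphism.

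\textbf{Part (ii).} Here I would combine part (i) with the classical flat base change for ordinary local cohomology. Since $T$ is $R$-flat, the identification of \v{C}ech complexes above delivers natural $T$-isomorphisms $\H^i_{\fa}(X) \otimes_R T \cong \H^i_{\fa}(X \otimes_R T)$ for every $\fa \in F(\mathcal{Z})$. Because tensor products commute with filtered colimits,
$$\H^i_{\mathcal{Z}}(X) \otimes_R T \;\cong\; \varinjlim_{\fa \in F(\mathcal{Z})} \bigl(\H^i_{\fa}(X) \otimes_R T\bigr) \;\cong\; \varinjlim_{\fa \in F(\mathcal{Z})} \H^i_{\fa}(X \otimes_R T) \;\cong\; \H^i_{\mathcal{Z}}(X \otimes_R T).$$
Applying part (i) to the $T$-complex $X \otimes_R T$ identifies this last term with $\H^i_{\mathcal{Z}^f}(X \otimes_R T)$, as required.

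The most delicate step is the acyclicity argument in part (i): one must carefully justify that a homologically left-bounded complex of $\Gamma_{\mathcal{Z}}$-acyclic $R$-modules represents $\ugamma_{\mathcal{Z}}$ in $\mathrm{D}(R)$. This is a standard consequence of a truncation/induction argument on $\sup Y$ (compare with the use of injective resolutions with the expected boundedness in Lemma \ref{A}(i)), but it should be invoked explicitly to ensure that the isomorphism $\ugamma_{\mathcal{Z}}(Y) \simeq \Gamma_{\mathcal{Z}}(J)$ is genuinely natural in $Y$ and compatible with the $T$-structure inherited from $J$.
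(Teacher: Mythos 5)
Your proposal is correct, and it takes a genuinely different route than the paper's proof. The paper argues at the level of ideal families: it shows that $\Omega:=\{\fa T\mid \fa\in F(\mathcal{Z})\}$ and $F(\mathcal{Z}^f)$ are cofinal (via the same primary-decomposition-and-preimage argument you use for your module-level identity), then invokes Lipman's independence-of-base and flat base change results \cite[Corollaries 3.4.3, 3.4.4]{Li} for the ordinary $\H^i_{\fa}$ before passing to the filtered colimit over $F(\mathcal{Z})$. You instead establish the cleaner functorial identity $\Gamma_{\mathcal{Z}}(N)=\Gamma_{\mathcal{Z}^f}(N)$ for every $T$-module $N$ at the outset, and then lift this to $\mathrm{D}(R)$ by observing that a $T$-injective resolution $J$ of $Y$ consists of $\Gamma_{\mathcal{Z}}$-acyclic $R$-modules, so $\Gamma_{\mathcal{Z}}(J)=\Gamma_{\mathcal{Z}^f}(J)$ already represents both derived functors. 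Your packaging actually proves the slightly stronger statement $\ugamma_{\mathcal{Z}}(Y)\simeq\ugamma_{\mathcal{Z}^f}(Y)$ in $\mathrm{D}(R)$, not just termwise cohomology isomorphisms, whereas the paper's colimit chase is shorter and leans on well-catalogued citations. The trade-off is that your route requires justifying the acyclicity lemma (a left-bounded complex of $\Gamma_{\mathcal{Z}}$-acyclics computes $\ugamma_{\mathcal{Z}}$), which you correctly flag as the delicate step; it follows by a standard truncation argument but should be given a reference. Two small clarifications worth making if you write this up formally: in part (ii), the ``identification of \v{C}ech complexes above'' from part (i) (which compares $R$- and $T$-\v{C}ech complexes on a single $T$-module) is not literally the same as the flat base change isomorphism $\H^i_{\fa}(X)\otimes_R T\cong\H^i_{\fa}(X\otimes_R T)$ you invoke here, so you should cite or prove the latter separately; and in part (i) it is worth saying explicitly that the isomorphism $\ugamma_{\mathcal{Z}}(Y)\simeq\Gamma_{\mathcal{Z}}(J)$ is natural because it is induced by any quasi-isomorphism from an $R$-injective resolution of $Y$ into $J$.
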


\begin{proof} Set $\Omega:=\left\{\fa T|\ \fa\in F(\mathcal{Z})\right\}$. Then, it is easy to see that $\Omega\subseteq F(\mathcal{Z}^f)$.
Let $\widetilde{\fb}\in F(\mathcal{Z}^f)$ and set $\fb:=f^{-1}(\widetilde{\fb})$. We claim that $\fb\in F(\mathcal{Z})$. To this end, it
is enough to show that every  minimal element $\fp$ in $\V(\fb)$ belongs to $\mathcal{Z}$.

Let $\fp$ be a minimal element in  $\V(\fb)$ and  $\widetilde{\fb}=\bigcap_{i=1}^nQ_i$ be a minimal primary decomposition of $\widetilde{\fb}$
in $T$. Then $\sqrt{Q_i}\in \mathcal{Z}^f$ for all $i=1,\dots, n$. As $\fb=\bigcap_{i=1}^nf^{-1}(Q_i)$ is a primary decomposition
of $\fb$, it turns out that $\fp=f^{-1}(\sqrt{Q_j})$ for some $1\leq j\leq n$. So, $\fp\in \mathcal{Z}$. Hence $\fb\in F(\mathcal{Z})$,
and so $\fb T\in \Omega$. Clearly, $\fb T\subseteq \widetilde{\fb}$. Thus the two families of ideals $\Omega$ and $F(\mathcal{Z}^f)$ are cofinal.

(i) Let $\fa$ be an ideal of $R$. Then by \cite[Corollary 3.4.3]{Li}, there is an $R$-isomorphism $\H_{\fa}^i(Y)\cong \H_{\fa T}^i(Y)$ for
all integers $i$. Hence, (i) follows by the following display of $R$-isomorphisms

\[\begin{array}{rl}
\H_{\mathcal{Z}}^i(Y)&\cong \underset{{\fa \in F(\mathcal{Z})}}\varinjlim \H_{\fa}^i(Y)\\
&\cong \underset{{\fa\in F(\mathcal{Z})}}\varinjlim \H_{\fa T}^i(Y)\\
&\cong \  \ \underset{{{\widetilde{\fb}\in \Omega}}}\varinjlim \  \  \H_{\widetilde{\fb}}^i(Y)\\
&\cong \underset{{{\widetilde{\fb}\in F(\mathcal{Z}^f)}}}\varinjlim \H_{\widetilde{\fb}}^i(Y)\\
&\cong \ \ \H_{\mathcal{Z}^f}^i(Y).
\end{array}\]

(ii) In view of \cite[Corollary 3.4.4]{Li}, one has the third isomorphism in the following display of $T$-isomorphisms

\[\begin{array}{rl}
\H_{\mathcal{Z}}^i(X)\otimes_RT&\cong (\underset{{\fa\in F(\mathcal{Z})}}\varinjlim \H_{\fa}^i(X))\otimes_RT\\
&\cong \underset{{\fa\in F(\mathcal{Z})}}\varinjlim (\H_{\fa}^i(X)\otimes_RT)\\
&\cong \underset{{\fa\in F(\mathcal{Z})}}\varinjlim \H_{\fa T}^i(X\otimes_RT)\\
&\cong \  \  \underset{{{\widetilde{\fb}\in \Omega}}}\varinjlim \  \  \H_{\widetilde{\fb}}^i(X\otimes_RT)\\
&\cong \underset{{{\widetilde{\fb}\in F(\mathcal{Z}^f)}}}\varinjlim \H_{\widetilde{\fb}}^i(X\otimes_RT)\\
&\cong \  \  \H_{\mathcal{Z}^f}^i(X\otimes_RT),
\end{array}\]
which completes the proof of (ii).
\end{proof}

Let $\mathcal{Z}$ be a specialization closed subset of $\Spec R$. Let $S$ be a multiplicatively closed subset of
$R$ and $f:R\lo S^{-1}R$ be the natural ring homomorphism.  In this case, we denote $\mathcal{Z}^f$ by $S^{-1}\mathcal{Z}$.
Clearly, $$S^{-1}\mathcal{Z}=\left\{S^{-1}\fp|\  \fp\cap S=\emptyset ~\text{and}~ \fp\in\mathcal{Z}\right\}.$$  In particular, for a
prime ideal $\fp$ of $R$, we denote $(R-\fp)^{-1}\mathcal{Z}$ by $\mathcal{Z}_{\fp}$. Assume that $R$ is local with the
unique maximal ideal $\fm$ and $\hat{R}$ is the completion of $R$ with respect to the $\fm$-adic topology. Let $f:R\lo
\hat{R}$ be the natural ring homomorphism.  In this case, we denote $\mathcal{Z}^f$ by $\widehat{\mathcal{Z}}$. Restating
Lemma \ref{B} (ii) for the flat $R$-algebras $S^{-1}R$ and $\hat{R}$ yields the following result.

\begin{cor}\label{E} Let $\mathcal{Z}$ be a specialization closed  subset of $\Spec R$ and \emph{$X\in \mathrm{D}_{
\sqsubset}(R)$}. Then the following statements hold.
\begin{itemize}
\item[(i)] Assume that $S$ is a multiplicatively closed subset of $R$. There is a natural $S^{-1}R$-isomorphism \emph{$S^{-1}
(\H_{\mathcal{Z}}^i(X)) \cong \H_{S^{-1}\mathcal{Z}}^i(S^{-1}X)$} for all integers $i$.
\item[(ii)] Assume that $(R,\fm)$ is a local ring. There is a natural $\hat{R}$-isomorphism \emph{$\H_{\mathcal{Z}}^i(X)\otimes_R
\hat{R}\cong \H_{\widehat{\mathcal{Z}}}^i(X\otimes_R\hat{R})$} for all integers $i$.
\end{itemize}
\end{cor}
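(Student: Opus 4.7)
The plan is to obtain both parts as direct specializations of Lemma \ref{B}(ii) to two standard flat $R$-algebras, namely $T = S^{-1}R$ and $T = \hat{R}$. In fact, the preamble to the corollary has already set up the notation so that $S^{-1}\mathcal{Z}$ and $\widehat{\mathcal{Z}}$ are, by definition, instances of the set $\mathcal{Z}^f$ appearing in Lemma \ref{B}, so the corollary is little more than a rebranding of that lemma in the two most important special cases.

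For part (i), I would take $f \colon R \lo S^{-1}R$ to be the canonical localization homomorphism. Since $S^{-1}R$ is a flat $R$-algebra, Lemma \ref{B}(ii) applies and yields a natural $S^{-1}R$-iso\-mor\-phism
$$\H_{\mathcal{Z}}^i(X) \otimes_R S^{-1}R \cong \H_{\mathcal{Z}^f}^i(X \otimes_R S^{-1}R)$$
for every integer $i$. The left-hand side is $S^{-1}(\H_{\mathcal{Z}}^i(X))$ by the usual identification of localization with tensoring against $S^{-1}R$, and on the right-hand side $\mathcal{Z}^f = S^{-1}\mathcal{Z}$ by definition, while $X \otimes_R S^{-1}R \simeq S^{-1}X$ in $\mathrm{D}(S^{-1}R)$ because localization is exact and so preserves the underlying derived tensor product. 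Assembling these identifications produces the desired isomorphism.

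For part (ii), the reasoning is entirely parallel. I would apply Lemma \ref{B}(ii) with $f \colon R \lo \hat{R}$ the completion homomorphism; since $R$ is Noetherian local, $\hat{R}$ is a flat $R$-algebra, and $\mathcal{Z}^f = \widehat{\mathcal{Z}}$ by definition. The conclusion is immediate.

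I do not foresee any real obstacle: the corollary is explicitly advertised as a restatement of Lemma \ref{B}(ii) in two flat-$R$-algebra settings, and the only verifications needed are the standard facts that $S^{-1}R$ and $\hat{R}$ are flat over $R$ and that $X \otimes_R S^{-1}R$ represents $S^{-1}X$ in the derived category. The only place that could, in principle, require a sentence of care is confirming that the natural isomorphisms furnished by Lemma \ref{B}(ii) are compatible with these two standard identifications; this is routine and well known.
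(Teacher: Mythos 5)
Your argument is exactly the paper's: the corollary is obtained by applying Lemma \ref{B}(ii) to the flat $R$-algebras $S^{-1}R$ and $\hat{R}$, which is precisely what the sentence preceding Corollary \ref{E} in the paper says. No gaps; the identifications you note are the standard ones and require nothing further.
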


The next result provides a comparison between the annihilation of local cohomology modules with respect to a specialization
closed subset of $\Spec R$ and the annihilation of their localizations.

\begin{lem}\label{F} Let $\mathcal{Z}$ be a specialization closed subset of $\Spec R$ and $X\in \mathrm{D}^f_{\sqsubset}(R)$.
Then for every $\fa\in F(\mathcal{Z})$ and every integer $t$,  the following statements
are equivalent:
\begin{itemize}
\item[(i)]\emph{{There exists a positive integer $l$ such that $\fa^{l}\H_{\mathcal{Z}}^i(X)=0$ for all $i<t$.}}
\item[(ii)]\emph{{For every $\fp\in\Spec R$, there exists a positive integer $l_{\fp}$ such that $\fa^{l_{\fp}}
\H_{{\mathcal{Z}}_{\fp}}^i(X_{\fp})
=0$ for all $i<t.$ }}
\end{itemize}
\end{lem}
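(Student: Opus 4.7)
My plan is to handle the two implications separately; the forward direction is trivial, and the reverse proceeds by induction on $t$.

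For (i) $\Rightarrow$ (ii), Corollary \ref{E}(i) gives a natural isomorphism $\H^i_{\mathcal{Z}_{\fp}}(X_{\fp}) \cong (\H^i_{\mathcal{Z}}(X))_{\fp}$, so if $\fa^l \H^i_{\mathcal{Z}}(X) = 0$ globally, then the same $l$ works as $l_{\fp}$ at every prime. This direction is immediate.

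For (ii) $\Rightarrow$ (i), I would induct on $t$. The base case $t \leq -\sup X$ is vacuous by Lemma \ref{A}(i). For the inductive step, note that (ii) at $t$ also gives (ii) at $t-1$ with the same $l_{\fp}$'s, so the induction hypothesis supplies $l_0$ with $\fa^{l_0}\H^i_{\mathcal{Z}}(X) = 0$ for all $i < t-1$, and Corollary \ref{D} upgrades this to finite generation of each such $\H^i_{\mathcal{Z}}(X)$. Set $M := \H^{t-1}_{\mathcal{Z}}(X)$. Applying Lemma \ref{C} to every $\fa^k \in F(\mathcal{Z})$, the modules $N_k := (0:_M \fa^k) \cong \Hom_R(R/\fa^k, M)$ are all finitely generated. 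The local condition (ii) at $t$ forces $\Supp M \subseteq \V(\fa)$ (otherwise $M_{\fp} = R_{\fp} M_{\fp} = 0$) and shows $M$ is $\fa$-torsion, so $M = \bigcup_k N_k$. Moreover, Corollary \ref{D} applied locally at each $\fp$ shows that $M_{\fp}$ is finitely generated over $R_{\fp}$.

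It remains to extract a uniform exponent $l'$ with $\fa^{l'} M = 0$, which is equivalent to the ascending chain $N_1 \subseteq N_2 \subseteq \cdots$ stabilizing: once $N_L = N_{L+1}$, the relation $\fa N_{L+k+1} \subseteq N_{L+k}$ propagates upward and forces $N_L = M$, so that $\fa^L M = 0$. My plan for stabilization is to combine three inputs: (a) each $\Supp N_k$ is closed in the Noetherian topological space $\V(\fa)$ because $N_k$ is finitely generated, so the ascending chain $\Supp N_1 \subseteq \Supp N_2 \subseteq \cdots$ stabilizes; (b) each quotient $N_{k+1}/N_k$ is a finitely generated $R/\fa$-module whose localization at $\fp$ vanishes as soon as $k \geq l_{\fp}$; and (c) each $M_{\fp}$ is finitely generated over $R_{\fp}$, so the chain $(N_k)_{\fp} \subseteq M_{\fp}$ stabilizes pointwise at step $l_{\fp}$.

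The hard part will be assembling (a), (b), and (c) into a uniform bound on $\sup_{\fp \in \Supp M} l_{\fp}$. Although each $l_{\fp}$ is finite, the exponents may vary across $\Spec R$, and $M$ itself is not a priori finitely generated. My expectation is that only finitely many quotients $N_{k+1}/N_k$ can be nonzero, by a Noetherian argument applied to the closed supports $\Supp(N_{k+1}/N_k) \subseteq \{\fp : l_{\fp} > k\}$, exploiting that these supports must eventually become empty due to the pointwise finiteness of $l_{\fp}$ together with the local finite generation of $M_{\fp}$.
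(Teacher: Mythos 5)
Your approach is a genuinely different route from the paper's. Where you attempt a Noetherian stabilization argument on the ascending chain $N_1 \subseteq N_2 \subseteq \cdots$ inside the possibly non-finitely-generated module $M := \H_{\mathcal{Z}}^{t-1}(X)$, the paper instead uses a finiteness-of-associated-primes argument: since $\Supp_R M \subseteq \V(\fa)$, the standard identity $\Ass_R(\Hom_R(R/\fa,M)) = \V(\fa) \cap \Ass_R M = \Ass_R M$, together with Lemma \ref{C} (which makes $\Hom_R(R/\fa,M)$ finitely generated), shows $\Ass_R M$ is \emph{finite}. Taking $l_2 := \max\{l_{\fp} : \fp \in \Ass_R M\}$ then gives $(\fa^{l_2}M)_{\fp}=0$ for all $\fp$ in $\Ass_R M \supseteq \Ass_R(\fa^{l_2}M)$, hence $\fa^{l_2}M=0$. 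The paper's base case $t = 1-\sup X$ is nontrivial and is handled the same way; you instead take the vacuous base $t \leq -\sup X$ and absorb the first nonvacuous step into the induction, which is harmless.

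The gap in your version is the final step. You observe that $\Supp(N_{k+1}/N_k)$ is closed and sits inside $\{\fp : l_{\fp} > k\}$, and that $\bigcap_k \{\fp : l_{\fp}>k\} = \emptyset$. But in a Noetherian space a sequence of closed subsets with empty intersection need not eventually be empty unless the sequence is \emph{descending}, and the sets $\{\fp : l_{\fp}>k\}$ are not closed, so containment in them does not by itself give you anything. What is missing, and what closes the argument, is the observation that $S_k := \Supp(N_{k+1}/N_k)$ is itself a descending chain: the ``propagation'' you invoke globally (that $N_L = N_{L+1}$ forces $N_L = M$) also holds after localization, because $(N_j)_{\fp} \cong (0:_{M_{\fp}} \fa^j R_{\fp})$, so $(N_k)_{\fp} = (N_{k+1})_{\fp}$ forces $(N_j)_{\fp}$ to be constant for all $j \geq k$; hence $\fp \notin S_k$ implies $\fp \notin S_{k+1}$, i.e. $S_{k+1} \subseteq S_k$. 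With $S_k$ a descending chain of closed subsets satisfying $\bigcap_k S_k = \emptyset$, the descending chain condition in the Noetherian space $\Spec R$ forces $S_K = \emptyset$ for some $K$, whence $M = N_K$ and $\fa^K M = 0$. Once this is added your argument is correct, though the paper's route through the finiteness of $\Ass_R M$ is shorter and avoids introducing the whole chain $N_k$.
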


\begin{proof} (i)$\Rightarrow$(ii) immediately follows by Corollary \ref{E} (i).

(ii)$\Rightarrow$(i) Clearly, we may assume that $t\geq 1-\sup X$. We proceed by induction on $t$. Let $t=1-\sup X$. Then by Lemma
\ref{A} (i), $\H_{\mathcal{Z}}^{-\sup X}(X)$ is  finitely generated, and so we may assume that $$\Ass_{R}(\H_{\mathcal{Z}}^{-\sup
{X}}(X))=\left\{\frak{p_{1}},\ldots, \fp_{r}\right\}.$$ Now, by our assumption, there exist positive integers $l_{\frak{p_{1}}},
\ldots, l_{\fp_{r}}$ such that $$\fa^{l_{\fp_i}}\H_{{\mathcal{Z}}_{\fp_{i}}}^{-\sup {X}}(X_{\fp_i})=0$$ for all $i=1,\ldots,r$.
Let $l:=\max\left\{l_{\frak{p_{1}}}, \ldots,l_{\fp_{r}}\right\}$. Then, in view of Corollary \ref{E} (i), $(\fa^{l}
\H_{\mathcal{Z}}^{-\sup {X}}(X))_{\fp_{i}}=0$ for all $i=1, \ldots,r$. Thus $\fa^{l}\H_{\mathcal{Z}}^{-\sup {X}}(X)=0$, because
$$\Ass_{R}(\fa^l\H_{\mathcal{Z}}^{-\sup {X}}(X))\subseteq \Ass_{R}(\H_{\mathcal{Z}}^{-\sup {X}}(X)).$$ Hence,
$\fa^{l}\H_{\mathcal{Z}}^i(X)=0$ for all $i<1-\sup X$.

Next, suppose that $t>1-\sup X$ and the result has been proved for $t-1$.
From the induction hypothesis, we deduce that there exists a positive integer $l_1$ such that $\fa^{l_1}\H_{\mathcal{Z}}^i(X)=0$ for
all $i<t-1$. Then, Corollary \ref{D} yields that $\H_{\mathcal{Z}}^i(X)$ is finitely generated for all $i<t-1$. Now, Lemma \ref{C}
implies that $\Hom_R(R/\fa,\H_{\mathcal{Z}}^{t-1}(X))$ is finitely generated. By the assumption, for every prime ideal $\fp$ of $R$,
there exists a positive integer $l_{\fp}$ such that $\fa^{l_{\fp}}\H_{{\mathcal{Z}}_{\fp}}^{t-1}(X_{\fp})=0$, and so
$\Supp_R(\H_{\mathcal{Z}}^{t-1}(X)) \subseteq\V(\fa)$. Therefore, $$\Ass_{R}(\H_{\mathcal{Z}}^{t-1}(X)) =\Ass_R(\Hom_R(R/\fa,
\H_{\mathcal{Z}}^{t-1}(X)))$$ is finite. Hence, by a similar argument as in the case $t=1-\sup X$, we may find a positive integer
$l_2$ such that $\fa^{l_2}\H_{\mathcal{Z}}^{t-1}(X)=0$.  Finally, set $l:=\max\left\{l_1,l_2\right\}$.
\end{proof}

Let us come to the last preparation for proving the main result of this section.

\begin{lem}\label{F1} Let $\mathcal{Z}$ be a specialization closed subset of $\Spec R$ and $\fp\in \mathcal{Z}$. Let $\widetilde{\fb}$
be an ideal of the ring $R_{\fp}$. If $\widetilde{\fb}\in F(\mathcal{Z}_{\fp})$, then $\widetilde{\fb}\cap R\in F(\mathcal{Z})$.
\end{lem}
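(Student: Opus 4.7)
The plan is to reduce the problem to understanding a minimal primary decomposition of $\widetilde{\fb}$ in $R_{\fp}$, then contract. The key observations are: (i) contraction of a primary ideal is primary; (ii) $\mathcal{Z}_{\fp}$ admits the explicit description $\mathcal{Z}_{\fp}=\{\fp'R_{\fp}\mid \fp'\in\mathcal{Z},\ \fp'\subseteq\fp\}$; (iii) $\mathcal{Z}$ is specialization closed.

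First I would dispose of the trivial case $\widetilde{\fb}=R_{\fp}$: then $\widetilde{\fb}\cap R=R$, so $\V(\widetilde{\fb}\cap R)=\emptyset\subseteq\mathcal{Z}$ and there is nothing to prove. So assume $\widetilde{\fb}$ is a proper ideal of $R_{\fp}$, which implies $\widetilde{\fb}\subseteq \fp R_{\fp}$ and hence $\widetilde{\fb}\cap R\subseteq\fp$.

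Next I would take a minimal primary decomposition $\widetilde{\fb}=\bigcap_{i=1}^n Q_i$ in $R_{\fp}$. Since $\V(\sqrt{Q_i})\subseteq\V(\widetilde{\fb})\subseteq\mathcal{Z}_{\fp}$, each $\sqrt{Q_i}$ lies in $\mathcal{Z}_{\fp}$. By the description of $\mathcal{Z}_{\fp}$, for every $i$ there is a prime $\fp_i\in\mathcal{Z}$ with $\fp_i\subseteq\fp$ and $\sqrt{Q_i}=\fp_i R_{\fp}$. Contracting to $R$ yields
\[
\widetilde{\fb}\cap R=\bigcap_{i=1}^n (Q_i\cap R),
\]
and each $Q_i\cap R$ is primary in $R$ (as the contraction of a primary ideal along a ring homomorphism is primary), with radical $\sqrt{Q_i}\cap R=\fp_i R_{\fp}\cap R=\fp_i$ (using $\fp_i\subseteq\fp$).

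Consequently, $\V(\widetilde{\fb}\cap R)=\bigcup_{i=1}^n \V(\fp_i)$. Since $\fp_i\in\mathcal{Z}$ and $\mathcal{Z}$ is specialization closed, $\V(\fp_i)\subseteq\mathcal{Z}$ for every $i$, and therefore $\V(\widetilde{\fb}\cap R)\subseteq\mathcal{Z}$, i.e.\ $\widetilde{\fb}\cap R\in F(\mathcal{Z})$. I do not foresee a serious obstacle here; the only point that requires a bit of care is the identification $\fp_i R_{\fp}\cap R=\fp_i$, which rests on the standard bijection between primes of $R_{\fp}$ and primes of $R$ contained in $\fp$ and is precisely why the reduction to the proper case was needed.
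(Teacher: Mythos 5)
Your proposal is correct and follows essentially the same route as the paper: take a minimal primary decomposition of $\widetilde{\fb}$ in $R_{\fp}$, contract to $R$, and observe that each contracted radical lies in $\mathcal{Z}$. The paper's version is terser, leaving implicit both the identification $\fp_i R_{\fp}\cap R=\fp_i$ and the final appeal to $\mathcal{Z}$ being specialization closed, but there is no substantive difference in the argument.
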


\begin{proof} Assume that $\widetilde{\fb}\in F(\mathcal{Z}_{\fp})$. Let $\widetilde{\fb}=\bigcap_{i=1}^nQ_i$  be a minimal primary
decomposition of $\widetilde{\fb}$ in $R_{\fp}$. Let $1\leq i\leq n$. As $\V(\widetilde{\fb})\subseteq \mathcal{Z}_{\fp}$, one has
$\sqrt{Q_i}\in \mathcal{Z}_{\fp}$, and so $$\sqrt{Q_i\cap R}=\sqrt{Q_i}\cap R\in \mathcal{Z}.$$ This completes the argument, because
$\widetilde{\fb}\cap R=\bigcap_{i=1}^n(Q_i\cap R)$.
\end{proof}

The following result is the derived category analogue of Faltings' Local-global Principle Theorem for a single specialization closed
subset $\mathcal{Z}$ of $\Spec R$.

\begin{thm}\label{G} Let $\mathcal{Z}$ be a specialization closed subset of $\Spec R$ and $X\in \mathrm{D}^f_{\sqsubset}(R)$.
Then for every integer $t$,  the following statements are equivalent:
\begin{itemize}
\item[(i)]\emph{{$\H_{\mathcal{Z}}^i(X)$ is a finitely generated $R$-module for all $i<t$.}}
\item[(ii)]\emph{{$\H_{\mathcal{Z}_{\fp}}^i(X_{\fp})$ is a finitely generated $R_{\fp}$-module for all $i<t$
and all $\fp\in\Spec R$.}}
\end{itemize}
\end{thm}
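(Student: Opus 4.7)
I will handle (i)$\Rightarrow$(ii) by direct localization and (ii)$\Rightarrow$(i) by induction on $t$, reducing the inductive step to Lemma~\ref{F}. For (i)$\Rightarrow$(ii), I shall apply Corollary~\ref{D} to obtain $\fa\in F(\mathcal{Z})$ with $\fa\H_{\mathcal{Z}}^i(X)=0$ for all $i<t$; since $\V(\fa R_{\fp})\subseteq \mathcal{Z}_{\fp}$ we have $\fa R_{\fp}\in F(\mathcal{Z}_{\fp})$, Corollary~\ref{E}(i) gives $\fa R_{\fp}\H_{\mathcal{Z}_{\fp}}^i(X_{\fp})=0$, and Corollary~\ref{D} in the ring $R_{\fp}$ delivers the required finite generation of $\H_{\mathcal{Z}_{\fp}}^i(X_{\fp})$.

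For (ii)$\Rightarrow$(i), I will proceed by induction on $t$, the base case $t\leq 1-\sup X$ being immediate from Lemma~\ref{A}(i). For the inductive step, the induction hypothesis applied at $t-1$ gives $\H_{\mathcal{Z}}^i(X)$ finitely generated for $i<t-1$, so by Corollary~\ref{D} there exists $\fa\in F(\mathcal{Z})$ with $\fa\H_{\mathcal{Z}}^i(X)=0$ for $i<t-1$. By Corollary~\ref{D} again, it suffices to produce some $\fb\in F(\mathcal{Z})$ annihilating $\H_{\mathcal{Z}}^i(X)$ for all $i<t$, and by Lemma~\ref{F} it suffices to find $\fb\in F(\mathcal{Z})$ such that at each $\fp\in\Spec R$ some power of $\fb R_{\fp}$ annihilates $\H_{\mathcal{Z}_{\fp}}^i(X_{\fp})$ for all $i<t$.

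To construct $\fb$, I shall apply Lemma~\ref{C} with the ideal $\fa$ to see that $\Hom_R(R/\fa,\H_{\mathcal{Z}}^{t-1}(X))$ is finitely generated, whence $\{\fp_1,\ldots,\fp_r\}:=\Ass(\H_{\mathcal{Z}}^{t-1}(X))\cap \V(\fa)$ is finite and (by Lemma~\ref{A}(ii)) contained in $\mathcal{Z}$. At each $\fp_i$, assumption (ii) together with Corollary~\ref{D} in $R_{\fp_i}$ yields $\widetilde{\fc}_i\in F(\mathcal{Z}_{\fp_i})$ annihilating $\H_{\mathcal{Z}_{\fp_i}}^j(X_{\fp_i})$ for all $j<t$, and Lemma~\ref{F1} turns this into $\fc_i:=\widetilde{\fc}_i\cap R\in F(\mathcal{Z})$ with $\fc_i R_{\fp_i}\subseteq \widetilde{\fc}_i$. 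I then set $\fb:=\fa\fc_1\cdots\fc_r\in F(\mathcal{Z})$; by design $\fb\H_{\mathcal{Z}}^i(X)=0$ for $i<t-1$ and $\fb R_{\fp_i}$ annihilates $\H_{\mathcal{Z}_{\fp_i}}^{t-1}(X_{\fp_i})$ at each distinguished prime.

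The hard part will be verifying the Lemma~\ref{F} hypothesis at primes outside $\{\fp_1,\ldots,\fp_r\}$, i.e., that $\fb R_{\fq}$ contains a power-annihilator of $\H_{\mathcal{Z}_{\fq}}^{t-1}(X_{\fq})$ for \emph{every} $\fq$. Using the identity $\Supp M=\bigcup_{\fp\in \Ass M}\V(\fp)$ valid over Noetherian $R$, this reduces to showing $\Ass(\H_{\mathcal{Z}}^{t-1}(X))\subseteq \V(\fa)$: otherwise, an associated prime $\fq$ with $\fa\not\subseteq \fq$ would simultaneously force $\H_{\mathcal{Z}_{\fq}}^i(X_{\fq})=0$ for $i<t-1$ (since $\fa R_{\fq}=R_{\fq}$) and $\fq R_{\fq}\in\Ass_{R_{\fq}}\H_{\mathcal{Z}_{\fq}}^{t-1}(X_{\fq})$, a configuration to be excluded through a subtle combined use of Lemma~\ref{C} and assumption (ii). With this containment secured, every $\fq\in\Supp\H_{\mathcal{Z}}^{t-1}(X)$ specializes some $\fp_i$, so $\fc_i\subseteq \fp_i\subseteq \fq$ forces $\fb R_{\fq}$'s radical to contain $\sqrt{\Ann\H_{\mathcal{Z}_{\fq}}^{t-1}(X_{\fq})}$; Lemma~\ref{F} then delivers $\fb^l\H_{\mathcal{Z}}^i(X)=0$ for $i<t$, and Corollary~\ref{D} concludes.
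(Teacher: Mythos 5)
Your (i)$\Rightarrow$(ii) direction is fine, though a detour: the paper simply localizes via Corollary~\ref{E}(i). Your reduction of (ii)$\Rightarrow$(i) to the Lemma~\ref{F} hypothesis is a reasonable plan and differs from the paper's route, but the construction and verification of $\fb$ contain a genuine gap.

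First, the reduction step you defer is not merely ``subtle''; the containment $\Ass_R(\H_{\mathcal{Z}}^{t-1}(X))\subseteq \V(\fa)$ is simply false in general. Take $\mathcal{Z}=\V(\fp)$ for a height-one prime $\fp$ in a Noetherian domain, $X=R$, $t=2$. Then $\H_{\mathcal{Z}}^{0}(R)=\Gamma_{\fp}(R)=0$, so Corollary~\ref{D} permits $\fa=R\in F(\mathcal{Z})$, whence $\V(\fa)=\emptyset$; yet $\H_{\mathcal{Z}}^{1}(R)=\H^1_{\fp}(R)$ is typically nonzero and so has nonempty $\Ass$. Thus the ``finite distinguished set'' $\Ass(\H_{\mathcal{Z}}^{t-1}(X))\cap\V(\fa)$ need not control $\Supp \H_{\mathcal{Z}}^{t-1}(X)$ at all.

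Second, even granting that containment, your last step does not work. You need $\fb R_{\fq}\subseteq \sqrt{\Ann_{R_{\fq}}\H_{\mathcal{Z}_{\fq}}^{t-1}(X_{\fq})}$ for every $\fq$, not the reverse inclusion you wrote; and for $\fq\supsetneq\fp_i$ the inclusion $\fc_i R_{\fp_i}\subseteq\Ann_{R_{\fp_i}}\H_{\mathcal{Z}_{\fp_i}}^{t-1}(X_{\fp_i})$ gives no information about $\Ann_{R_{\fq}}\H_{\mathcal{Z}_{\fq}}^{t-1}(X_{\fq})$: $R_{\fp_i}$ is a further localization of $R_{\fq}$, and annihilators can only grow under localization. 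So pinning down the annihilator at the finitely many primes $\fp_i$ tells you nothing about it at specializations $\fq\supsetneq\fp_i$, which is exactly where you must control it. The paper's proof sidesteps this entirely: it sets $\fa_{\fp}:=\Ann_{R_{\fp}}(\H_{\mathcal{Z}_{\fp}}^{t-1}(X_{\fp}))\cap R$ for \emph{every} $\fp$ (using Lemma~\ref{F1} to keep these in $F(\mathcal{Z})$), observes that $\fa_{\fp}L_{\fb}$ is finitely generated (Lemma~\ref{C}) and vanishes at $\fp$, hence on an open neighbourhood of $\fp$, and then extracts a \emph{finite subcover of} $\Spec R$; the finite intersection of the corresponding $\fa_{\fp_i}$ kills $L_{\fb}$, and thence $\H_{\mathcal{Z}}^{t-1}(X)=\bigcup_{\fb}L_{\fb}$. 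That Zariski-topological open-cover argument is the device your sketch is missing; replacing it with a finite set of associated primes plus specialization does not close the gap.
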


\begin{proof} (i)$\Rightarrow$(ii) is clear by Corollary \ref{E} (i).

(ii)$\Rightarrow$(i) We may and do assume that $t\geq 1-\sup X$ and proceed by induction on $t$. If $t=1-\sup X$, then by Lemma
\ref{A} (i) we see that $\H_{\mathcal{Z}}^{i}(X)$ is a finitely generated $R$-module for all $i<t$. Let $t>1-\sup X$ and suppose
that the result has been proved for $t-1$. The induction hypothesis implies that $\H_{\mathcal{Z}}^i(X)$ is a finitely generated
$R$-module for all $i<t-1$, and so by Lemma \ref{C} the $R$-module $\L_{\fb}:=\Hom_R(R/\fb,\H_{\mathcal{Z}}^{t-1}(X))$ is
finitely generated for all $\fb\in F(\mathcal{Z})$.

Fix $\fb\in F(\mathcal{Z})$. For every prime ideal $\fp$ of $R$, we set $$\fa_{\fp}:=\Ann_{R_{\fp}}(\H_{\mathcal{Z}_{\fp}}^{t-1}(X_{\fp}))
\cap R.$$ Lemma \ref{F1} yields that $\fa_{\fp}\in F(\mathcal{Z})$. As $\fa_{\fp}\L_{\fb}$ is a finitely generated $R$-module and
$(\fa_{\fp}\L_{\fb})_{\fp}=0$, there exists an element $x_{\fp}$ in $R-\fp$ such that
$(\fa_{\fp}\L_{\fb})_{x_{\fp}} =0$. Now, for every prime ideal $\fp$ of $R$, set $U_{x_{\fp}}:=\Spec R-\V(Rx_{\fp})$ and notice that
for every $\fq\in U_{x_{\fp}}$, one has $(\fa_{\fp}\L_{\fb})_{\fq}=0$. Since any increasing chain of open subsets of $\Spec R$ is
stationary, there exists a finite subset $\left\{\fp_1,...,\fp_{\ell}\right\}$ of $\Spec R$ such that $$\Spec R=\bigcup_{i=1}^{\ell}
U_{x_{\fp_i}}.$$ Hence, by setting ${\fa}:=\bigcap_{i=1}^{\ell}\fa_{\fp_{i}}$, one can see that $\fa\in F(\mathcal{Z})$ and
$(\fa\L_{\fb})_{\fp}=0$ for all $\fp\in \Spec R$. So,  $\fa\L_{\fb}=0$. This implies that $\fa (0:_{\H_{\mathcal{Z}}^{t-1}(X)}\fb)=0$,
because $\L_{\fb}\cong 0:_{\H_{\mathcal{Z}}^{t-1}(X)}\fb$. By Lemma \ref{A} (ii), one has $\Supp_R \H_{\mathcal{Z}}^{t-1}(X)
\subseteq \mathcal{Z}$. Hence $\H_{\mathcal{Z}}^{t-1}(X)=\Gamma_{\mathcal{Z}}(\H_{\mathcal{Z}}^{t-1}(X))$, and so $$\H_{\mathcal{Z}}^{t-1}(X)
=\underset{\fb\in F(\mathcal{Z})}\bigcup (0:_{\H_{\mathcal{Z}}^{t-1}(X)}\fb).$$ Thus
$\fa\H_{\mathcal{Z}}^{t-1}(X)=0$, and so $\H_{\mathcal{Z}}^{t-1}(X)\cong \Hom_R(R/\fa,\H_{\mathcal{Z}}^{t-1}(X))$. Now, Lemma \ref{C}
completes the proof.
\end{proof}

By Corollary \ref{D} and Theorem \ref{G}, one can immediately deduce the following result.

\begin{cor}\label{H}
Let $\mathcal{Z}$ be a specialization closed subset of $\Spec R$ and $X\in\mathrm{D}^f_{\sqsubset}(R)$.
\emph{{$$f_{\mathcal{Z}}(X)=\inf \left\{i\in \mathbb{Z}|\ \H_{\mathcal{Z}}^i(X) \  \text{is not
finitely generated}\right\}=\inf \left\{f_{\mathcal{Z_{\fp}}}(X_{\fp})|\  \fp \in \Spec R\right\}.$$}}
\end{cor}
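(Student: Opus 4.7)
The plan is to read this corollary off as a near-immediate consequence of Corollary~\ref{D} and Theorem~\ref{G}, the main effort being a routine repackaging of the definition of $f_{\mathcal{Z}}(X)$ in terms of finite generation.

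First, I would unwind the definition: $f_{\mathcal{Z}}(X) \geq t$ says precisely that for every integer $i<t$ there exists some $\mathfrak{c}\in F(\mathcal{Z})$ with $\mathfrak{c}\H^i_{\mathcal{Z}}(X)=0$. By Corollary~\ref{D} this condition is in turn equivalent to the existence of a \emph{single} ideal $\mathfrak{a}\in F(\mathcal{Z})$ with $\mathfrak{a}\H^i_{\mathcal{Z}}(X)=0$ for all $i<t$, and hence equivalent to $\H^i_{\mathcal{Z}}(X)$ being finitely generated for all $i<t$. Taking the infimum over the thresholds $t$ that fail yields the first equality
$$f_{\mathcal{Z}}(X)=\inf\{i\in\mathbb{Z}:\H^i_{\mathcal{Z}}(X)\text{ is not finitely generated}\}.$$

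For the second equality, I would apply Theorem~\ref{G}: $\H^i_{\mathcal{Z}}(X)$ is finitely generated for all $i<t$ if and only if $\H^i_{\mathcal{Z}_{\fp}}(X_{\fp})$ is a finitely generated $R_{\fp}$-module for all $i<t$ and all $\fp\in\Spec R$. Rewriting the right-hand side as "$f_{\mathcal{Z}_{\fp}}(X_{\fp})\geq t$ for every $\fp\in\Spec R$" — where the already-proved first equality is now applied over each local ring $R_{\fp}$ to the complex $X_{\fp}\in\mathrm{D}^f_{\sqsubset}(R_{\fp})$ and the specialization closed subset $\mathcal{Z}_{\fp}\subseteq\Spec R_{\fp}$ — gives
$$f_{\mathcal{Z}}(X)\geq t \iff f_{\mathcal{Z}_{\fp}}(X_{\fp})\geq t\text{ for all }\fp\in\Spec R,$$
from which $f_{\mathcal{Z}}(X)=\inf\{f_{\mathcal{Z}_{\fp}}(X_{\fp}):\fp\in\Spec R\}$ follows at once.

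There is no genuine obstacle; the only points that need verification are that localization preserves membership in $\mathrm{D}^f_{\sqsubset}$ and that $\mathcal{Z}_{\fp}$ is specialization closed in $\Spec R_{\fp}$, both of which are standard and already used implicitly in the earlier arguments.
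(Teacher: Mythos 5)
Your proposal is correct and follows essentially the same route as the paper, which simply cites Corollary~\ref{D} and Theorem~\ref{G} without spelling out the bookkeeping; you have filled in exactly that bookkeeping. One small clarification: Corollary~\ref{D} is the equivalence between the \emph{single-ideal} annihilation condition and finite generation, while the passage from ``for each $i<t$ there is some $\fc_i$'' to ``there is a single $\fa$'' is the easy product argument over the finite range $-\sup X \le i < t$ (using Lemma~\ref{A}(i) to bound the range from below) rather than Corollary~\ref{D} itself; your chain of equivalences is nonetheless valid.
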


\section{Faltings' Annihilator Theorem}

We start this section with the following technical, but useful, result.

\begin{lem}\label{I} Let $\mathcal{Z}\subseteq \mathcal{Y}$ be two specialization closed subsets of $\Spec R$ such that
\emph{$\V(\fp)\cap (\mathcal{Y}-\mathcal{Z})=\left\{\fp\right\}$} for all $\fp\in \mathcal{Y}-\mathcal{Z}$. Then for every
injective $R$-module $E$, there exists a natural $R$-isomorphism $$\Theta_{E}:\frac{\Gamma_{\mathcal{Y}}(E)}{\Gamma_{\mathcal{Z}}(E)}
\lo \underset{\tiny {\fp\in\mathcal{Y}-\mathcal{Z}}}\bigoplus\Gamma_{\fp R_{\fp}}(E_{\fp}).$$
\end{lem}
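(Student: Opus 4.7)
The plan is to invoke Bass's structure theorem, writing $E=\bigoplus_{\fq \in \Spec R}\E(R/\fq)^{(\mu_{\fq})}$, and to analyse both sides summand by summand. The source then decomposes as $\Gamma_{\mathcal{Y}}(E)/\Gamma_{\mathcal{Z}}(E)\cong \bigoplus_{\fq \in \mathcal{Y}-\mathcal{Z}}\E(R/\fq)^{(\mu_{\fq})}$. For the target, the crucial pointwise fact is that $\Gamma_{\fp R_{\fp}}(\E(R/\fq)_{\fp})$ vanishes unless $\fp=\fq$: if $\fq\not\subseteq \fp$ then $\E(R/\fq)_{\fp}=0$, while for $\fq \subsetneq \fp$ the standard socle argument (any nonzero element killed by $\fp R_{\fp}$ would generate a simple submodule of the essential extension $\E_{R_{\fp}}(R_{\fp}/\fq R_{\fp})$, which is impossible because $\fq R_{\fp}\subsetneq \fp R_{\fp}$) forces the torsion to vanish. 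Hence $\bigoplus_{\fp \in \mathcal{Y}-\mathcal{Z}}\Gamma_{\fp R_{\fp}}(E_{\fp})\cong \bigoplus_{\fp \in \mathcal{Y}-\mathcal{Z}}\E(R/\fp)^{(\mu_{\fp})}$, matching the source.

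To produce a natural map $\Theta_E$, rather than an identification depending on a choice of Bass decomposition, I would take for each $\fp \in \mathcal{Y}-\mathcal{Z}$ the composition
\[
\Gamma_{\mathcal{Y}}(E)/\Gamma_{\mathcal{Z}}(E) \longrightarrow \bigl[\Gamma_{\mathcal{Y}}(E)/\Gamma_{\mathcal{Z}}(E)\bigr]_{\fp}\cong \Gamma_{\mathcal{Y}_{\fp}}(E_{\fp})/\Gamma_{\mathcal{Z}_{\fp}}(E_{\fp}),
\]
where the first arrow is canonical localization and the isomorphism follows from exactness of localization together with Corollary \ref{E}(i) (applied to $E$, noting that for an injective module $\Gamma_{\mathcal{Z}}(E)=\H^0_{\mathcal{Z}}(E)$). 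The hypothesis enters at the next step: for any $\fq \subseteq \fp$ with $\fq \in \mathcal{Y}-\mathcal{Z}$ one gets $\fp \in \V(\fq)\cap (\mathcal{Y}-\mathcal{Z})=\{\fq\}$, forcing $\fq=\fp$, so combined with the socle calculation the quotient collapses canonically onto $\Gamma_{\fp R_{\fp}}(E_{\fp})$. Assembling over $\fp$, and observing via the Matlis decomposition that each class has nonzero image in only finitely many coordinates, produces $\Theta_E$; bijectivity is then visible summand-by-summand, and naturality is automatic since every ingredient is functorial in $E$.

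The principal obstacle is making the identification $\Gamma_{\mathcal{Y}_{\fp}}(E_{\fp})/\Gamma_{\mathcal{Z}_{\fp}}(E_{\fp})\cong \Gamma_{\fp R_{\fp}}(E_{\fp})$ precise and tracking where the hypothesis is forced. Without it, any prime $\fq \in \mathcal{Y}-\mathcal{Z}$ strictly below $\fp$ would contribute an extra summand $\E_{R_{\fp}}(R_{\fp}/\fq R_{\fp})^{(\mu_{\fq})}$ to the quotient, destroying both the identification and the requirement that the image of $\Theta_E$ lie in the direct sum rather than merely the product.
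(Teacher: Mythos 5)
Your argument is correct and arrives at the same conclusion, but it is organized differently from the paper's proof. The paper writes down a single natural $R$-homomorphism $\theta_M\colon \Gamma_{\mathcal{Y}}(M)\to \bigoplus_{\fp\in\mathcal{Y}-\mathcal{Z}}\Gamma_{\fp R_\fp}(M_\fp)$, $\theta_M(m)=(\tfrac{m}{1})_\fp$, defined for \emph{every} $R$-module $M$. The hypothesis on $\mathcal{Y}-\mathcal{Z}$ is used immediately to check well-definedness: any $\fp\in(\Supp_R Rm)\cap(\mathcal{Y}-\mathcal{Z})$ must be minimal in $\Supp_R Rm$ (if $\fq\subsetneq\fp$ lies in $\Supp_R Rm\subseteq\mathcal{Y}$, then either $\fq\in\mathcal{Z}$, forcing $\fp\in\mathcal{Z}$ by specialization-closedness, or $\fq\in\mathcal{Y}-\mathcal{Z}$, contradicting $\V(\fq)\cap(\mathcal{Y}-\mathcal{Z})=\{\fq\}$), so only finitely many coordinates are nonzero and each lands in the torsion submodule. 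The paper then verifies $\Ker\theta_M=\Gamma_{\mathcal{Z}}(M)$ at the module level and proves surjectivity of $\theta_E$ for injective $E$ by element-chasing in a fixed Matlis decomposition. Your route instead matches the two sides abstractly summand by summand via Matlis/Bass, and then builds the natural map by composing localization, the commutation $[\Gamma_{\mathcal{Y}}/\Gamma_{\mathcal{Z}}(E)]_\fp\cong\Gamma_{\mathcal{Y}_\fp}/\Gamma_{\mathcal{Z}_\fp}(E_\fp)$, and the inverse of the natural inclusion $\Gamma_{\fp R_\fp}(E_\fp)\hookrightarrow\Gamma_{\mathcal{Y}_\fp}(E_\fp)/\Gamma_{\mathcal{Z}_\fp}(E_\fp)$. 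Unwinding this, your $\Theta_E$ coincides with the paper's $\theta_E$ passed to the quotient. The trade-off: your version localizes the use of the hypothesis very cleanly (it appears precisely as $\mathcal{Y}_\fp-\mathcal{Z}_\fp=\{\fp R_\fp\}$ for $\fp\in\mathcal{Y}-\mathcal{Z}$, which is what makes the collapse an isomorphism), but it uses injectivity of $E$ earlier and more globally, since the collapse isomorphism itself depends on the Matlis decomposition of $E_\fp$; the paper's map is defined for arbitrary $M$ and reserves injectivity only for the surjectivity step. One small point you should make explicit: the ``collapse'' must be taken as the inverse of the natural inclusion $\Gamma_{\fp R_\fp}(E_\fp)\to\Gamma_{\mathcal{Y}_\fp}(E_\fp)/\Gamma_{\mathcal{Z}_\fp}(E_\fp)$ (an isomorphism because $E_\fp$ is injective over $R_\fp$), since it is only through this description that naturality in $E$ is evident.
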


\begin{proof} Let $M$ be an $R$-module. There exists a natural $R$-homomorphism $$\theta_{M}:\Gamma_{\mathcal{Y}}(M)\longrightarrow
\underset{\tiny{\fp\in\mathcal{Y}-\mathcal{Z}}}\bigoplus \Gamma_{\fp R_{\fp}}(M_{\fp}),$$ with $\theta_{M}(m):=(\frac{m}{1})_{\fp}$
for all $m\in \Gamma_{\mathcal{Y}}(M)$. Let $m\in\Gamma_{\mathcal{Y}}(M)$. Then $\Supp_RRm\subseteq \mathcal{Y}$, and hence our
assumption on $\mathcal{Y}-\mathcal{Z}$ implies that each element of $(\Supp_RRm)\cap (\mathcal{Y}-\mathcal{Z})$ is minimal in
$\Supp_RRm$. Thus $\frac{m}{1}\in \Gamma_{\fp R_{\fp}}(M_{\fp})$ for all $\fp\in \mathcal{Y}-\mathcal{Z}$ and $(\Supp_RRm)
\cap (\mathcal{Y}-\mathcal{Z})$ is a finite set. So, $\theta_{M}$ is well-defined. Also, one can easily check that
$\Ker\theta_{M}=\Gamma_{\mathcal{Z}}(M)$.

Next, we prove that for any injective $R$-module $E$, the $R$-homomorphism $\theta_{E}$ is surjective. Let $$E=\underset{\tiny{\fq\in
\Spec R}}\bigoplus \E(R/\fq)^{(\mu^{0}(\fq,E))}$$ be an injective $R$-module. Let $\fp_{\circ}\in \mathcal{Y}-\mathcal{Z}$ and $\frac{x}{s}
\in \Gamma_{\fp_{\circ} R_{\fp_{\circ}}}(E_{\fp_{\circ}})$.  Since $x\in E$, $x=(x_{\fq})_{\fq},$ where $x_{\fq}\in \E(R/\fq)^{(\mu^{0}(\fq,
E))}$. As  $\frac{x}{s}\in \Gamma_{\fp_{\circ} R_{\fp_{\circ}}}(E_{\fp_{\circ}})$,  there is a positive integer $n$ and $\widetilde{s}
\in R-\fp_{\circ}$ such that $\widetilde{s}\fp_{\circ}^nx=0$. Let $\fq$ be a prime ideal of $R$ with $\fp_{\circ}\nsubseteq \fq$ and let
$t_{\fq}\in \fp_{\circ}-\fq$. Since $$\E(R/\fq)^{(\mu^{0}(\fq,E))}\overset{t_{\fq}^n}\longrightarrow \E(R/\fq)^{(\mu^{0}(\fq,E))}$$ is an
isomorphism, we get that $\widetilde{s}x_{\fq}=0$. Next, let $\fq$ be a prime ideal of $R$ with $\fq\nsubseteq \fp_{\circ}$. There is
a positive integer $n_{\fq}$ such that $\fq^{n_{\fq}}x_{\fq}=0$. Let $s_{\fq}\in \fq-\fp_{\circ}$. Then $s_{\fq}^{n_{\fq}}x_{\fq}=0$.
So, we may take $\check{s}\in R-\fp_{\circ}$ such that $\check{s}x_{\fq}=0$ for all $\fq\neq \fp_{\circ}$. Note that only finitely many
of $x_{\fq}\ $'s are nonzero. Thus, without loss of generality, we may assume that $x_{\fq}=0$ for all $\fq\neq \fp_{\circ}$. In particular, $(0:_Rx)=(0:_Rx_{\fp_{\circ}}).$ Hence $x$ is annihilated by some power of $\fp_{\circ}$, and so $x\in \Gamma_{\mathcal{Y}}(E)$.
On the other hand, using the fact
that the map $$\E(R/\fp_{\circ})^{(\mu^{0}(\fp_{\circ},E))}\overset{s}\longrightarrow \E(R/\fp_{\circ})^{(\mu^{0}(\fp_{\circ},E))}$$ is an
isomorphism, one deduces that $sy_{\fp_{\circ}}=x_{\fp_{\circ}}$ for some $y_{\fp_{\circ}}\in \E(R/\fp_{\circ})^{(\mu^{0}(\fp_{\circ},E))}$.
Let $\delta$ denote the Kronecker delta. Then for $y:=(\delta_{\fp_{\circ},\fq}y_{\fp_{\circ}})_{\fq}$, we have $\frac{y}{1}=\frac{x}{s}$
in $E_{\fp_{\circ}}$. Note that there exists a positive integer $t$ such that $\fp_{\circ}^t y=0$, and so $y\in \Gamma_{\mathcal{Y}}(E)$.
Since $\V(\fp_{\circ})\cap (\mathcal{Y}-\mathcal{Z})=\left\{\fp_{\circ}\right\}$, we deduce that $\frac{y}{1}=0$ in $E_{\fp}$ for all $\fp\in
(\mathcal{Y}-\mathcal{Z})-\left\{\fp_{\circ}\right\}$. So, $$\theta_{E}(y)=(\frac{y}{1})_{\fp}=(\delta_{\fp_{\circ},\fp}\frac{x}{s})_{\fp}.$$
Therefore $\theta_{E}$ is surjective, and so it induces a natural $R$-isomorphism $$\Theta_{E}:\frac{\Gamma_{\mathcal{Y}}(E)}{\Gamma_{
\mathcal{Z}}(E)}\lo \underset{\tiny{\fp\in \mathcal{Y}-\mathcal{Z}}}\bigoplus\Gamma_{\fp R_{\fp}}(E_{\fp}).$$
\end{proof}

Next, we establish a useful long exact sequence of local cohomology modules.

\begin{lem}\label{J} Let $\mathcal{Z}\subseteq \mathcal{Y}$ be two specialization closed subsets of $\Spec R$ such that \emph{$\V(\fp)
\cap (\mathcal{Y}-\mathcal{Z})=\left\{\fp\right\}$} for all $\fp\in \mathcal{Y}-\mathcal{Z}$. Then for any $X\in \mathrm{D}_{\sqsubset}(R),$ there is a
long exact sequence \emph{$$\cdots \tiny{\rightarrow} \underset{\tiny{\fp\in \mathcal{Y}-\mathcal{Z}}}\bigoplus \large{\H}_{\fp R_{\fp}}^{i-1}(X_{\fp})
\tiny{\rightarrow}  \large{\H}_{\mathcal{Z}}^i(X)\rightarrow \large{\H}_{\mathcal{Y}}^i(X)\tiny{\rightarrow}  \underset{\tiny{\fp\in \mathcal{Y}-\mathcal{Z}}}\bigoplus
\large{\H}_{\fp R_{\fp}}^i(X_{\fp})\tiny{\rightarrow}  \large{\H}_{\mathcal{Z}}^{i+1}(X)\tiny{\rightarrow} \cdots.$$}
\end{lem}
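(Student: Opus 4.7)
The plan is to extract the long exact sequence from a short exact sequence of complexes obtained by applying Lemma \ref{I} termwise to an injective resolution of $X$. Start by choosing an injective resolution $I$ of $X$, which exists by \cite[Theorem A.3.2]{Ch}. Since Lemma \ref{I} is natural in the injective $R$-module input and the differentials $\partial_n^I \colon I_n \lo I_{n-1}$ are $R$-linear (and localization is functorial), applying $\Theta$ at each degree assembles into a degreewise split short exact sequence of $R$-complexes
$$0\lo \Gamma_{\mathcal{Z}}(I)\lo \Gamma_{\mathcal{Y}}(I)\lo \underset{\tiny{\fp\in \mathcal{Y}-\mathcal{Z}}}\bigoplus \Gamma_{\fp R_{\fp}}(I_{\fp})\lo 0.$$
The standard long exact sequence in homology now yields a sequence of the predicted shape, with connecting maps of the right degree.

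It remains to identify the three term sequences of homology modules with the claimed local cohomology modules. The identifications $\H_{-i}(\Gamma_{\mathcal{Z}}(I))=\H_{\mathcal{Z}}^i(X)$ and $\H_{-i}(\Gamma_{\mathcal{Y}}(I))=\H_{\mathcal{Y}}^i(X)$ are immediate from the definition, since $I$ is an injective resolution of $X$. For the third term, I would use two standard facts: first, localization of an $R$-injective module at any prime is injective as a module over the localized ring, so each $I_{n,\fp}$ is $R_{\fp}$-injective; second, localization is exact, so $I_{\fp}$ is an injective resolution of $X_{\fp}$ over $R_{\fp}$. This gives $\H_{-i}(\Gamma_{\fp R_{\fp}}(I_{\fp}))=\H_{\fp R_{\fp}}^i(X_{\fp})$. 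Because homology commutes with arbitrary direct sums in the category of $R$-modules, I then get
$$\H_{-i}\left(\underset{\tiny{\fp\in \mathcal{Y}-\mathcal{Z}}}\bigoplus \Gamma_{\fp R_{\fp}}(I_{\fp})\right)\cong\underset{\tiny{\fp\in \mathcal{Y}-\mathcal{Z}}}\bigoplus \H_{\fp R_{\fp}}^i(X_{\fp}),$$
which completes the identification.

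The main obstacle is really bookkeeping rather than depth: one must check that Lemma \ref{I} truly produces a \emph{short exact sequence of complexes}, i.e., that the isomorphisms $\Theta_{I_n}$ are compatible with the differentials induced by $\partial^I$ on all three terms. This follows from the naturality statement in Lemma \ref{I} (which I would invoke explicitly), together with the fact that the third complex acquires its differential by applying the exact functor $(-)_{\fp}$ degreewise and then $\Gamma_{\fp R_{\fp}}$, followed by the direct sum, so the differentials preserve each direct summand. Once this naturality is recorded, the rest is an entirely routine application of the homology long exact sequence and the facts recalled in the previous paragraph.
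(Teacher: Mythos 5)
Your proof is correct and follows essentially the same route as the paper: apply Lemma~\ref{I} termwise to an injective resolution $I$ of $X$ to obtain a short exact sequence of complexes with $\Gamma_{\mathcal{Z}}(I)$ and $\Gamma_{\mathcal{Y}}(I)$ on the left, then pass to the long exact sequence in homology and identify the third term using the observations that $I_{\fp}$ is an injective resolution of $X_{\fp}$ and that homology commutes with direct sums. The only cosmetic difference is that the paper first names the auxiliary functor $\Gamma_{\mathcal{Y}/\mathcal{Z}}(-) := \Gamma_{\mathcal{Y}}(-)/\Gamma_{\mathcal{Z}}(-)$ and its derived version, and only afterwards identifies $\Gamma_{\mathcal{Y}/\mathcal{Z}}(I)$ with the direct sum, whereas you merge these two steps; the substance, including the naturality check, is identical.
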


\begin{proof} First, let $M$ be an $R$-module, and set $\Gamma_{\mathcal{Y}/\mathcal{Z}}(M):=\Gamma_{\mathcal{Y}}(M)/\Gamma_{\mathcal{Z}}(M)$.
Note that $\Gamma_{\mathcal{Y}/\mathcal{Z}}(-)$ is a functor from the category of $R$-modules to itself, but not necessarily left exact. We
consider the right derived functor of this functor in $\mathrm{D}(R)$. Let $X\in \mathrm{D}_{\sqsubset}(R)$ and $I$ be an injective resolution
of $X$.  Then for every prime ideal $\fp$ of $R$, we may check that $I_{\fp}$ is an injective resolution of the $R_{\fp}$-complex $X_{\fp}$.

We set
$\H_{\mathcal{Y}/ \mathcal{Z}}^i(X):=\H_{-i}(\Gamma_{\mathcal{Y}/\mathcal{Z}}(I))$ for all integers $i$. One can use the following exact
sequence of $R$-complexes $$0\longrightarrow\Gamma_{\mathcal{Z}}(I)\longrightarrow\Gamma_{\mathcal{Y}}(I)\longrightarrow \Gamma_{\mathcal{Y}/
\mathcal{Z}}(I)\longrightarrow 0,$$ to obtain the long exact sequence $$\cdots \longrightarrow\H_{\mathcal{Y}/\mathcal{Z}}^{i-1}(X)\longrightarrow
\H_{\mathcal{Z}}^i(X)\longrightarrow \H_{\mathcal{Y}}^i(X)\longrightarrow\H_{\mathcal{Y}/\mathcal{Z}}^i(X)\rightarrow \H_{\mathcal{Z}}^{i+1}(X)
\rightarrow \cdots.$$ Lemma \ref{I} yields that the two complexes $\Gamma_{\mathcal{Y}/\mathcal{Z}}(I)$ and $\underset{\tiny{\fp\in \mathcal{Y}-
\mathcal{Z}}}\bigoplus\Gamma_{\fp R_{\fp}}(I_{\fp})$ are isomorphic, and so $$\H_{\mathcal{Y}/\mathcal{Z}}^i(X)\cong \underset{\tiny{\fp\in
\mathcal{Y}-\mathcal{Z}}}\bigoplus \H_{\fp R_{\fp}}^i(X_{\fp})$$ for all integers $i$. This completes the proof.
\end{proof}

Next, we include the following immediate consequence.

\begin{cor}\label{K} Let $\mathcal{Z}$ be a specialization closed subset of $\Spec R$ and $X\in \mathrm{D}_{\sqsubset}(R)$.
Then the following statements hold.
\begin{itemize}
\item[(i)] For every integer $n$,  \emph{$\mathcal{Z}_n:=\left\{~\fp\in\mathcal{Z}|~\h\fp\geq n\right\}$} is a specialization
closed subset of $\Spec R$ and \emph{$\bigcap_{n\in\mathbb{Z}}\mathcal{Z}_{n}=\emptyset$.}
\item[(ii)] If $\dim R$ is finite, then \emph{$\H_{\mathcal{Z}_{n}}^i(X)=0$} for all $i$ and all $n> \dim R$.
\item[(iii)] For any two integers $i$ and $n$, there exists an exact sequence \emph{$$\H_{\mathcal{Z}_{n+1}}^i(X)
\longrightarrow \H_{\mathcal{Z}_{n}}^i(X)\longrightarrow\underset{\tiny{\fp\in \mathcal{Z}_{n}-\mathcal{Z}_{n+1}}}\bigoplus
\H_{\fp R_{\fp}}^i(X_{\fp}).$$}
\end{itemize}
\end{cor}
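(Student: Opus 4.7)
The plan is to prove the three parts in sequence: (i) is a direct verification from the definitions, (ii) is an immediate consequence of (i), and (iii) follows from Lemma \ref{J} once a short height computation is carried out.

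For (i), I would verify specialization-closedness of $\mathcal{Z}_n$ by taking $\fp \in \mathcal{Z}_n$ and $\fq \in \V(\fp)$: then $\fq \in \mathcal{Z}$ because $\mathcal{Z}$ is itself specialization closed, and $\h \fq \geq \h \fp \geq n$, so $\fq \in \mathcal{Z}_n$. For the emptiness of the intersection, I would invoke the fact that in a Noetherian ring every prime ideal has finite height: for any $\fp \in \Spec R$ we have $\fp \notin \mathcal{Z}_{\h\fp + 1}$, and hence $\bigcap_{n\in\mathbb{Z}} \mathcal{Z}_n = \emptyset$. Part (ii) is equally short: if $n > \dim R$ then $\mathcal{Z}_n = \emptyset$, so for any injective resolution $I$ of $X$ we have $\Gamma_{\mathcal{Z}_n}(I) = 0$, and hence $\H_{\mathcal{Z}_n}^i(X) = 0$ for every integer $i$.

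Part (iii) is the substantive step. I would apply Lemma \ref{J} to the pair of specialization closed subsets $\mathcal{Z}_{n+1} \subseteq \mathcal{Z}_n$; the desired three-term exact sequence then appears as a segment of the resulting long exact sequence. The hypothesis to verify is that $\V(\fp) \cap (\mathcal{Z}_n - \mathcal{Z}_{n+1}) = \{\fp\}$ for every $\fp \in \mathcal{Z}_n - \mathcal{Z}_{n+1}$. Any such $\fp$ satisfies $\h\fp = n$ exactly; if $\fq \in \V(\fp)$ with $\fq \neq \fp$, then $\fq \supsetneq \fp$ forces $\h\fq \geq n+1$, and since $\mathcal{Z}$ is specialization closed we get $\fq \in \mathcal{Z}_{n+1}$, so $\fq$ is excluded from $\mathcal{Z}_n - \mathcal{Z}_{n+1}$. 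This confirms the hypothesis and finishes the proof.

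The only place where there might be a subtle point is the strict-chain argument in part (iii); it hinges on noting that a proper specialization strictly increases height, which is standard. Otherwise the corollary is a direct packaging of Lemma \ref{J} for the filtration $\{\mathcal{Z}_n\}$.
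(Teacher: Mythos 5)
Your proof is correct and matches the paper's intent exactly: the paper states Corollary \ref{K} without proof as an ``immediate consequence'' of Lemma \ref{J}, and the only nontrivial verification is precisely the one you carry out, namely that $\mathcal{Z}_{n+1}\subseteq\mathcal{Z}_n$ satisfies the hypothesis of Lemma \ref{J} because a proper specialization strictly increases height. Parts (i) and (ii) are handled correctly (the finiteness of height relies on Krull's height theorem for Noetherian rings, as you note), and part (iii) reads off the required three-term exact sequence from the long exact sequence of Lemma \ref{J}.
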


We need to apply the following first quadrant spectral sequence in the proof of the main result of this section.

\begin{lem}\label{L} Let $\mathcal{Z}$ be a specialization closed subset of $\Spec R$. Then for any $X\in \mathrm{D}_{\sqsubset}(R)$
with $\sup X=0$ and any $\fa\in F(\mathcal{Z})$, there is a first quadrant spectral sequence \emph{$$\E_2^{p,q}:=\H_{\fa}^{p}
(\H_{\mathcal{Z}}^{q}(X))\underset{p}\Longrightarrow \H_{\fa}^{p+q}(X).$$}
\end{lem}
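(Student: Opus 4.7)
\textbf{The plan} is to exhibit this sequence as the Grothendieck spectral sequence for the composition of the two left-exact functors $\Gamma_{\mathcal{Z}}$ and $\Gamma_{\fa}$, evaluated at $X$. Its existence reduces to verifying two familiar facts about these functors.

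First, the identity $\Gamma_{\fa}\circ\Gamma_{\mathcal{Z}}=\Gamma_{\fa}$ holds on $R$-modules: indeed, $\fa\in F(\mathcal{Z})$ means $\V(\fa)\subseteq \mathcal{Z}$, so $\Gamma_{\fa}(M)\subseteq \Gamma_{\mathcal{Z}}(M)$, and conversely the $\fa$-torsion submodule of $\Gamma_{\mathcal{Z}}(M)$ is exactly $\Gamma_{\fa}(M)$. This is precisely the identity already exploited in the proof of Lemma \ref{C}. Second, $\Gamma_{\mathcal{Z}}$ carries injective $R$-modules to injective $R$-modules: for any $E\cong \bigoplus_{\fp\in\Spec R}\E(R/\fp)^{(\mu^{0}(\fp,E))}$, the calculation performed in the proof of Lemma \ref{A}(iii) gives $\Gamma_{\mathcal{Z}}(E)\cong \bigoplus_{\fp\in\mathcal{Z}}\E(R/\fp)^{(\mu^{0}(\fp,E))}$, which is again injective, hence $\Gamma_{\fa}$-acyclic.

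Using $\sup X=0$, I then pick by \cite[Theorem A.3.2 (I)]{Ch} an injective resolution $I$ of $X$ with $I_{i}=0$ for all $i>0$, and set $J:=\Gamma_{\mathcal{Z}}(I)$. Then $J$ is a left-bounded complex of injective $R$-modules, quasi-isomorphic to $\ugamma_{\mathcal{Z}}(X)$, whose cohomology is $\H_{\mathcal{Z}}^{q}(X)$; and the composition identity yields $\Gamma_{\fa}(J)=\Gamma_{\fa}(I)$, which computes $\H_{\fa}^{p+q}(X)$. Applying $\Gamma_{\fa}$ to a Cartan--Eilenberg injective resolution of $J$ produces a first-quadrant bicomplex whose two standard spectral sequences both abut to the cohomology of the total complex, i.e.\ to $\H_{\fa}^{p+q}(X)$. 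One of these spectral sequences degenerates on the $\E_{2}$-page because every $J^{q}$ is injective and hence $\Gamma_{\fa}$-acyclic; the other reads
$$\E_{2}^{p,q}=\H_{\fa}^{p}(\H_{\mathcal{Z}}^{q}(X))\underset{p}\Longrightarrow \H_{\fa}^{p+q}(X),$$
as desired. The sequence sits in the first quadrant because $\H_{\fa}^{p}=0$ for $p<0$ by left-exactness of $\Gamma_{\fa}$, and $\H_{\mathcal{Z}}^{q}(X)=0$ for $q<0$ by Lemma \ref{A}(i) combined with the hypothesis $\sup X=0$.

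No step is conceptually deep; the only real care needed is bookkeeping. One must reconcile the paper's homological convention $\H_{\mathcal{Z}}^{i}=\H_{-i}\circ \ugamma_{\mathcal{Z}}$ with the cohomologically indexed Cartan--Eilenberg machinery, and verify that the bicomplex genuinely lies in a single quadrant so that both its spectral sequences converge to the cohomology of the total complex. It is precisely the hypothesis $\sup X=0$ that pins the bicomplex into the first quadrant and thereby secures convergence.
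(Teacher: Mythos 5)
Your proof is correct, and it proceeds along a genuinely different (if parallel) route to the paper's. The paper simply \emph{cites} the hypercohomology spectral sequence of \cite[1.6]{FI} applied to the complex $\ugamma_{\mathcal{Z}}(X)$, i.e.\ $\H_{\fa}^{-p}(\H_{q}(\ugamma_{\mathcal{Z}}(X)))\Rightarrow \H_{\fa}^{-p-q}(\ugamma_{\mathcal{Z}}(X))$, and then spends its effort on the chain of isomorphisms $\H_{\fa}^{-p-q}(\ugamma_{\mathcal{Z}}(X))\cong\H_{\fa}^{-p-q}(X)$, the key step of which is $\ugamma_{\fa}\Gamma_{\mathcal{Z}}(I)\simeq\Gamma_{\fa}\Gamma_{\mathcal{Z}}(I)=\Gamma_{\fa}(I)$. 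You instead \emph{construct} the spectral sequence directly as the Grothendieck spectral sequence for the composition $\Gamma_{\fa}\circ\Gamma_{\mathcal{Z}}=\Gamma_{\fa}$, verifying the hypothesis that $\Gamma_{\mathcal{Z}}$ carries injectives to injectives, and then run both spectral sequences of $\Gamma_{\fa}$ applied to a Cartan--Eilenberg resolution of $J:=\Gamma_{\mathcal{Z}}(I)$. Note that in fact both arguments secretly rely on the same two facts: the composition identity $\Gamma_{\fa}\Gamma_{\mathcal{Z}}=\Gamma_{\fa}$, and the injectivity of $\Gamma_{\mathcal{Z}}(I_i)$ (which the paper needs to justify $\ugamma_{\fa}(\Gamma_{\mathcal{Z}}(I))\simeq\Gamma_{\fa}(\Gamma_{\mathcal{Z}}(I))$, namely that $\Gamma_{\mathcal{Z}}(I)$ is itself a semi-injective representative). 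What your route buys is self-containedness: no appeal to \cite{FI}, only classical double-complex machinery. What the paper's route buys is brevity, outsourcing the construction to a reference and reducing the work to an identification of the abutment. Two small points worth making explicit in your argument: $\Gamma_{\mathcal{Z}}$ commutes with arbitrary direct sums (immediate from its definition via $\Supp_R Rx$), and over a Noetherian ring direct sums of injectives are injective; together these upgrade the per-summand computation of Lemma~\ref{A}(iii) to the statement that $\Gamma_{\mathcal{Z}}(E)$ is injective for every injective $E$.
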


\begin{proof} By \cite[1.6]{FI}, we have the following spectral sequence $$\E^2_{p,q}:=\H_{\fa}^{-p}(\H_{q}(\ugamma_{\mathcal{Z}}(X)))
\underset{p}\Longrightarrow \H_{\fa}^{-p-q}(\ugamma_{\mathcal{Z}}(X)).$$ Let $I$ be
an injective resolution of $X$. Then, one has the following natural $R$-isomorphisms
\[\begin{array}{rl}
\H_{\fa}^{-p-q}(\ugamma_{\mathcal{Z}}(X))&\cong \H_{\fa}^{-p-q}(\Gamma_{\mathcal{Z}}(I))\\
&\cong \H_{p+q}(\ugamma_{\fa}(\Gamma_{\mathcal{Z}}(I)))\\
&\cong \H_{p+q}(\Gamma_{\fa}(\Gamma_{\mathcal{Z}}(I)))\\
&\cong \H_{p+q}(\Gamma_{\fa}(I))\\
&\cong \H_{p+q}(\ugamma_{\fa}(X))\\
&\cong \H_{\fa}^{-p-q}(X),
\end{array}\]
which completes the argument.
\end{proof}

Now, we are ready to prove the Annihilator Theorem for local cohomology modules of complexes.

\begin{thm}\label{M} Let $\mathcal{Z}\subseteq\mathcal{Y}$ be two specialization closed subsets of $\Spec R$,
$X\in\mathrm{D}^f_{\Box}(R)$ and $n$ an integer. Consider the following statements:
\begin{itemize}
\item[(i)]\emph{{There exists an ideal $\fa\in F(\mathcal{Y})$ such that $\fa\H_{\mathcal{Z}}^i(X)=0$ for all $i\leq n.$ }}
\item[(ii)]\emph{{For every $\fq\notin \mathcal{Y}$ and every $\fp\in\mathcal{Z}\cap \V(\fq)$, one has $\depth_{R_{\fq}}X_{\fq}+
\h\fp/\fq> n.$}}
\end{itemize}
Then, \emph{(i)} always implies \emph{(ii)} and \emph{(ii)} implies \emph{(i)}, provided $R$ is a homomorphic image of a
finite-dimensional Gorenstein ring.
\end{thm}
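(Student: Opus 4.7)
My plan is to treat the two implications separately, since only (ii) $\Rightarrow$ (i) needs the Gorenstein hypothesis.

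For (i) $\Rightarrow$ (ii), I would argue by contradiction. Suppose $\fa\in F(\mathcal{Y})$ satisfies $\fa\H^i_\mathcal{Z}(X)=0$ for $i\le n$, yet some pair $\fq\notin\mathcal{Y}$, $\fp\in\mathcal{Z}\cap\V(\fq)$ has $d+h\le n$ with $d:=\depth_{R_\fq}X_\fq$ and $h:=\h\fp/\fq$. Since $\V(\fa)\subseteq\mathcal{Y}$ and $\fq\notin\mathcal{Y}$, we have $\fa\not\subseteq\fq$, hence $\fa R_\fq=R_\fq$, and Corollary \ref{E}(i) lets us localize at $\fp$ to reduce to the case $R$ local with maximal ideal $\fp=\fm\in\mathcal{Z}$ and $\dim R/\fq=h\ge 1$. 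I would then induct on $h$. In the base $h=1$, the set $\mathcal{Y}':=\mathcal{Z}\cup\{\fq\}$ is specialization closed (since $\V(\fq)=\{\fq,\fm\}$) and satisfies the hypothesis of Lemma \ref{J}. Localizing the resulting long exact sequence at $\fq$, and using $(\H^j_\mathcal{Z}(X))_\fq=0$ for $j\le n+1$ (immediate from $\fa R_\fq=R_\fq$ and Corollary \ref{E}(i)), produces an isomorphism $\H^i_{\fq R_\fq}(X_\fq)\cong(\H^i_{\mathcal{Y}'}(X))_\fq$ for $i<n$. A second round of annihilator chasing, exploiting the specific shape of $\mathcal{Y}'$, then forces $\H^i_{\fq R_\fq}(X_\fq)=0$ for $i\le n-1$, contradicting $d\le n-1$. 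For $h>1$, insert an intermediate prime $\fq\subsetneq\fq^\ast\subseteq\fp$ of height one over $\fq$ and apply the inductive hypothesis to $(\fq^\ast,\fp)$ with $\h\fp/\fq^\ast=h-1$.

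For (ii) $\Rightarrow$ (i) under the Gorenstein hypothesis, use Lemma \ref{B}(i) to transfer $\mathcal{Z},\mathcal{Y}$ and $X$ back along the surjection $Q\twoheadrightarrow R$, so WLOG $R$ itself is Gorenstein of finite dimension. Since $\Supp_R\H^i_\mathcal{Z}(X)\subseteq\mathcal{Z}\subseteq\mathcal{Y}$ by Lemma \ref{A}(ii), it suffices to show that $\H^i_\mathcal{Z}(X)$ is a finitely generated $R$-module for every $i\le n$; the (finite) product of their annihilators then lies in $F(\mathcal{Z})\subseteq F(\mathcal{Y})$ and realizes (i). By Theorem \ref{G} this is a local question, so I may fix $\fp\in\Spec R$ and work in the Gorenstein local ring $R_\fp$, where WLOG $\fp=\fm\in\mathcal{Z}$ (otherwise $\H^\bullet_{\mathcal{Z}_\fp}(X_\fp)=0$). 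Filter $\mathcal{Z}$ by height using $\mathcal{Z}_k:=\{\fp'\in\mathcal{Z}:\h\fp'\ge k\}$ as in Corollary \ref{K}. For each $k$, the inclusion $\mathcal{Z}_{k+1}\subseteq\mathcal{Z}_k$ satisfies the hypothesis of Lemma \ref{J} because height-$k$ primes are pairwise incomparable, and Corollary \ref{K}(ii) gives vanishing at the top; descending induction on $k$ via the long exact sequence of Lemma \ref{J} reduces finite generation of $\H^i_{\mathcal{Z}_k}(X)$ to that of each summand $\H^i_{\fp'R_{\fp'}}(X_{\fp'})$ for $\fp'$ of height exactly $k$ in $\mathcal{Z}$.

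For each such $\fp'$, local duality in the Gorenstein local ring $R_{\fp'}$ realizes $\H^i_{\fp'R_{\fp'}}(X_{\fp'})$ as the Matlis dual of an $\Ext$-module, so artinianness is automatic and finite length (equivalently, finite generation) is equivalent to the vanishing of certain $\Ext_{R_{\fp'}}^j(X_{\fp'},R_{\fp'})$-terms that measure $\depth_{R_{\fq'}}X_{\fq'}$ at the primes $\fq'\subseteq\fp'$. Applying hypothesis (ii) to the pairs $(\fq',\fp')$ with $\fq'\notin\mathcal{Y}$ delivers exactly the depth bounds needed to guarantee the required finite generation for $i\le n$. I expect the main obstacle to be executing the local-duality translation cleanly for complexes in $\mathrm{D}^f_\Box(R_{\fp'})$ rather than for modules, and bookkeeping the support and incomparability conditions needed for Lemma \ref{J} at each step of the height filtration.
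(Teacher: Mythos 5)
Your plan does not close either direction.

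For the implication (i) $\Rightarrow$ (ii), your reduction to the local case with $h=1$ is fine, and the isomorphism $\H^i_{\fq R_\fq}(X_\fq)\cong(\H^i_{\mathcal{Y}'}(X))_\fq$ obtained from Lemma \ref{J} is correct (in fact it holds for all $i$, since $\mathcal{Z}_\fq=\emptyset$). But at that point the argument stalls: $\fq$ belongs to $\mathcal{Y}'=\mathcal{Z}\cup\{\fq\}$, so there is no reason for $(\H^i_{\mathcal{Y}'}(X))_\fq$ to vanish, and hypothesis (i) gives no control whatsoever on $\H^i_{\mathcal{Y}'}(X)$. The phrase ``a second round of annihilator chasing'' names no mechanism; nothing in Lemmas \ref{A}--\ref{L} makes $\fa$-torsion of $\H^\bullet_{\mathcal{Z}}(X)$ propagate to $\H^\bullet_{\mathcal{Y}'}(X)$. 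The paper bridges this gap very differently: after localizing at $\fp$, it completes and invokes Cohen's structure theorem to place itself over a Gorenstein local ring for free, then uses the spectral sequence of Lemma \ref{L}, $\H^p_\fm(\H^q_\mathcal{Z}(X))\Rightarrow\H^{p+q}_\fm(X)$, to turn annihilation of $\H^q_\mathcal{Z}(X)$ into annihilation of $\H^{p+q}_\fm(X)$, and then applies local duality twice (at $\fm$ and at $\fq$) to obtain the depth inequality. Without some substitute for that spectral sequence plus duality step, your induction on $h$ has no engine, and the inductive step also needs a (nontrivial, catenary-flavoured) depth comparison between $\fq$ and $\fq^\ast$ that you do not supply.

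For the implication (ii) $\Rightarrow$ (i), the very first reduction is false. You claim it suffices to prove that $\H^i_\mathcal{Z}(X)$ is finitely generated for $i\le n$; but (ii) does not imply this. By Corollary \ref{D}, finite generation of $\H^i_\mathcal{Z}(X)$ for $i< t$ is equivalent to annihilation by an ideal in $F(\mathcal{Z})$, whereas (i) only requires an annihilator in the much larger family $F(\mathcal{Y})$. Concretely, if $\mathcal{Y}=\Spec R$ then $F(\mathcal{Y})$ contains the zero ideal, so (i) holds trivially and (ii) is vacuous, yet $\H^i_\mathcal{Z}(X)$ is typically not finitely generated (e.g.\ $\mathcal{Z}=\V(\fm)$, $X=R$). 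Trying to prove finite generation is attempting something strictly stronger than the statement and, in general, false. The paper instead keeps the target as uniform annihilation by an ideal in $F(\mathcal{Y})$: for each height $t$ and each $i\le n$, local duality over the Gorenstein ring and hypothesis (ii) force $\Supp_{R_\fp}\bigl(\Ext^{\h\fp-i}_{R_\fp}(X_\fp,R_\fp)\bigr)\subseteq\mathcal{Y}_\fp$, one extracts (via Lemma \ref{F1} and a Noetherian-open-cover argument) a single ideal $\fa_{t,i}\in F(\mathcal{Y})$ killing all $\H^i_{\fp R_\fp}(X_\fp)$ with $\h\fp=t$ and $\fp\in\mathcal{Z}$, and the exact sequence of Corollary \ref{K}(iii) then assembles the product $\prod_t\fa_{t,i}$ into an annihilator of $\H^i_\mathcal{Z}(X)$. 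Your use of the height filtration and local duality is in the right spirit, but it must target annihilation (with ideals in $F(\mathcal{Y})$), not finite generation.
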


\begin{proof} Note that by Lemma \ref{A} (i), $\H_{\mathcal{Z}}^i(X)=0$ for all $i<-\sup X$ and $\H_{\mathcal{Z}}^{-\sup X}(X)$ is
finitely generated. Set $T:=\Sigma^{-\sup X}X$ and note that $\fa\H_{\mathcal{Z}}^i(X)=0$ for all $i\leq n$  if and only if
$\fa\H_{\mathcal{Z}}^i(T)=0$ for all $i\leq n+\sup X$. Also, one can see that $$\depth_{R_{\fq}}T_{\fq}=\depth_{R_{\fq}} X_{\fq}+
\sup X$$for all $\fq\in\Spec R$. Therefore, by replacing $X$ with $T$, we may and do assume that $\sup X=0$, and so
$\H_{\mathcal{Z}}^i(X)=0$ for all $i<0$ and $\H_{\mathcal{Z}}^{0}(X)$ is a finitely generated $R$-module.

(i)$\Rightarrow$(ii) First, we reduce the situation to the case that $R$ is a Gorenstein local ring.  To this
end, let $\fq\notin \mathcal{Y}$ and $\fp\in\mathcal{Z}\cap \V(\fq)$. We should show that $$\depth_{R_{\fq}}X_{\fq}+\h\fp/\fq> n,$$
which it is equivalent to show that $$\depth_{({R_{\fp})}_{\fq R_{\fp}}}(({X_{\fp}})_{\fq R_{\fp}})+\dim R_{\fp}/\fq R_{\fp}> n.$$
Hence, in view of Corollary \ref{E} (i), by replacing $R$, $\mathcal{Y}$,  $\mathcal{Z}$ and $X$ with $R_{\fp}$, $\mathcal{Y}_{\fp}$,
$\mathcal{Z}_{\fp}$ and $X_{\fp}$; respectively, we may and do assume that $(R,\fm)$ is a local ring and we must show that for every
$\fq\notin \mathcal{Y}$, $$\depth_{R_{\fq}}X_{\fq}+\dim R/\fq>n.$$ Let $\hat{R}$ be the completion of $R$ with respect to the
$\fm$-adic topology.  We have $$\dim R/\fq=\dim \hat{R}/\fq\hat{R}=\dim\hat{R} /\frak Q,$$ for some $\frak Q\in\Min{\fq\hat{R}}$,
and one can see that $\frak Q\notin \widehat{\mathcal{Y}}.$ On the other hand, in view of \cite[Corollary 2.6]{Iy} we have
$$\depth_{R_{\fq}} X_{\fq}=\depth_{\hat{R}_{\frak{Q}}}(X_{\fq}\otimes_{R_{\fq}}\hat{R}_{\frak{Q}})=\depth_{\hat{R}_{\frak{Q}}}(
X\otimes_{R}{\hat{R}})_{\frak{Q}}.$$ Corollary \ref{E} (ii) yields that there is an $\hat{R}$-isomorphism $\H_{\mathcal{Z}}^i(X)
\otimes_{R}\hat{R}\cong \H_{\widehat{\mathcal{Z}}}^i(X\otimes_{R}\hat{R})$ for all integers $i$. Thus we may assume that $R$ is a
complete local ring, and so it is a homomorphic image of a Gorenstein local ring. Next, in view of Lemma \ref{B} (i), we can assume
that $R$ is a Gorenstein local ring.

Now for a given $\fq\notin\mathcal{Y},$ we should show that $\H_{\fq R_{\fq}}^j(X_{\fq})=0$ for all $j\leq n-\dim R/\fq.$
By Lemma \ref{L}, we have a first quadrant spectral sequence $$\E_2^{p,q}:=\H_{\fm}^{p}(\H_{\mathcal{Z}}^{q}(X))\underset{p}\Longrightarrow
\H_{\fm}^{p+q}(X).$$ So, we can use our assumption to deduce that $\fa^t\H_{\fm}^{j+\dim R/\fq}(X)=0$ for some positive
integer $t$. Hence, by using the Local Duality Theorem \cite[Chapter V, Theorem 6.2]{Ha}, ${\fa^t}\Ext_{R}^{\dim R-j-\dim R/\fq}(X,R)=0$,
and so $$\Supp_R(\Ext_R^{\dim R-j-\dim R/\fq}(X,R))\subseteq \V(\fa)\subseteq \mathcal{Y}.$$ As $\fq\notin \mathcal{Y}$ and $\dim R_{\fq}
=\dim R-\dim R/\fq,$ we deuce that $\Ext_{R_{\fq}}^{\dim R_{\fq}-j}(X_{\fq}, R_{\fq})=0$. Therefore, the Local Duality Theorem implies
that $\H_{\fq R_{\fq}}^{j}(X_{\fq})=0$.

(ii)$\Rightarrow$(i) First, note that by Lemma \ref{B} (i) we may assume that $R$ is Gorenstein and $\dim R<\infty$. Fix a non-negative
integer $i\leq n$. Let $\fq\notin\mathcal{Y}$ and $\fp\in\mathcal{Z}\cap \V(\fq)$. By the assumption, we have $$i-\h\fp/\fq\leq n-\h\fp/\fq<
\depth_{R_{\fq}}X_{\fq},$$ and so $\H_{\fq R_{\fq}}^{i-\h\fp/\fq}(X_{\fq})=0$. Then the Local Duality Theorem yields that
$\Ext_{R}^{\h\fp-i}(X,R)_{\fq}=0$. Thus, one deduces that $\Supp_{R_{\fp}}(\Ext_{R_{\fp}}^{\h\fp-i}(X_{\fp},R_{\fp}))
\subseteq \mathcal{Y}_{\fp}$.

Next, for every $\fp\in \mathcal{Z}$, set $${\fa}_{\fp,i}:=\Ann_{R_{\fp}}(\Ext_{R_{\fp}}^{\h\fp-i}(X_{\fp},
R_{\fp}))\cap R.$$ Applying Lemma \ref{F1} implies that $\V({\fa}_{\fp,i})\subseteq\mathcal{Y}$. Let $t$ be a non-negative integer and
$\fp\in \mathcal{Z}_{t}-\mathcal{Z}_{t+1}$. As ${\fa}_{\fp,i}\Ext_{R_{\fp}}^{t-i}(X_{\fp},R_{\fp})=0,$ there exists $x_{\fp}\in R-\fp$
such that $({\fa}_{\fp,i}\Ext_{R}^{t-i}(X,R))_{x_{\fp}}=0$. Now, set $U_{x_{\fp}}:=\Spec R-\V(Rx_{\fp})$ and note that for every
$\fq\in U_{x_{\fp}}$, one has $${\fa}_{\fp,i}\Ext_{R_{\fq}}^{t-i}(X_{\fq},R_{\fq})=0.$$ By using the fact that every increasing chain of
open subsets of $\Spec R$ is stationary, one can deduce that there exists a finite subset $\mathcal{W}_t$ of $\mathcal{Z}_{t}-\mathcal{Z}_{t+1}$
such that $$\underset{\fp\in \mathcal{Z}_{t}-\mathcal{Z}_{t+1}}\bigcup U_{x_{\fp}}=\underset{\fp\in \mathcal{W}_t}\bigcup U_{x_{\fp}}.$$ Hence,
by setting ${\fa}_{t,i}:=\underset{\fp\in \mathcal{W}_t}\bigcap\fa_{\fp,i}$, one can see that $\V({\fa}_{t,i})\subseteq\mathcal{Y}$ and
$\fa_{t,i}\Ext_{R_{\fp}}^{t-i}(X_{\fp},R_{\fp})=0$ for all $\fp\in\mathcal{Z}_{t}-\mathcal{Z}_{t+1}$. Applying the Local Duality Theorem
again implies that $\fa_{t,i}\H_{\fp R_{\fp}}^{i}(X_{\fp})=0$ for all $\fp\in\mathcal{Z}_{t}-\mathcal{Z}_{t+1}$. Now, using the exact sequence
given in Corollary \ref{K} (iii) implies that for the ideal $\fa_i:=\prod_{t=0}^{\dim R}\fa_{t,i}$, we have $\fa_i\H_{\mathcal{Z}}^{i}(X)=0$,
and so by setting $\fa:=\bigcap_{i=0}^n\fa_{i}$ the assertion follows.
\end{proof}

We close the paper with the following result.

\begin{cor}\label{P} Let $\mathcal{Z}\subseteq\mathcal{Y}$ be two specialization closed subsets of $\Spec R$ and $X\in\mathrm{D}^f_{\Box}(R)$.
Then the following statements hold.
\begin{itemize}
\item[(i)] $f_{\mathcal{Z}}^{\mathcal{Y}}(X)
\leq \lambda_{\mathcal{Z}}^{\mathcal{Y}}(X)$.
\item[(ii)] Assume that $R$ is a homomorphic image of a finite-dimensional Gorenstein ring. Then $f_{\mathcal{Z}}^{\mathcal{Y}}(X)=
\lambda_{\mathcal{Z}}^{\mathcal{Y}}(X)$ and $f_{\mathcal{Z}}^{\mathcal{Y}}(X)
=\inf \left\{f_{\mathcal{Z_{\fp}}}^{\mathcal{Y_{\fp}}}(X_{\fp})|\   \fp \in \Spec R\right\}$.
\end{itemize}
\end{cor}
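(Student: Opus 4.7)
The plan is to derive both parts of the corollary from Theorem \ref{M}, which translates the existence of an ideal $\fa\in F(\mathcal{Y})$ with $\fa\H_{\mathcal{Z}}^i(X)=0$ for all $i\leq n$ into the condition that $\depth_{R_{\fq}}X_{\fq}+\h(\fp/\fq)>n$ for every $\fq\notin\mathcal{Y}$ and every $\fp\in\mathcal{Z}\cap\V(\fq)$. The first condition always implies the second, and the converse holds under the Gorenstein hypothesis; this is precisely the asymmetry reflected in parts (i) and (ii) of the corollary.

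For (i), I would fix any integer $n<f_{\mathcal{Z}}^{\mathcal{Y}}(X)$. By definition, for each $i$ in the finite range $-\sup X\leq i\leq n$ (Lemma \ref{A}(i) kills smaller indices), there is an ideal $\fc_i\in F(\mathcal{Y})$ with $\fc_i\H_{\mathcal{Z}}^i(X)=0$; closure of $F(\mathcal{Y})$ under finite products then yields $\fa:=\prod_i\fc_i\in F(\mathcal{Y})$ with $\fa\H_{\mathcal{Z}}^i(X)=0$ for all $i\leq n$. The unconditional direction of Theorem \ref{M} delivers $\depth_{R_{\fq}}X_{\fq}+\h(\fp/\fq)>n$ for every admissible pair, so $n<\lambda_{\mathcal{Z}}^{\mathcal{Y}}(X)$; this shows $f_{\mathcal{Z}}^{\mathcal{Y}}(X)\leq\lambda_{\mathcal{Z}}^{\mathcal{Y}}(X)$, with the case $f=+\infty$ covered automatically.

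For the first statement of (ii), I would run the argument in reverse: for any integer $n<\lambda_{\mathcal{Z}}^{\mathcal{Y}}(X)$, condition (ii) of Theorem \ref{M} is satisfied, and the Gorenstein hypothesis now supplies an ideal $\fa\in F(\mathcal{Y})$ killing every $\H_{\mathcal{Z}}^i(X)$ with $i\leq n$. This forces $n<f_{\mathcal{Z}}^{\mathcal{Y}}(X)$, so $\lambda_{\mathcal{Z}}^{\mathcal{Y}}(X)\leq f_{\mathcal{Z}}^{\mathcal{Y}}(X)$, and combined with (i) this yields equality.

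For the localization formula, the key step is the identity $\lambda_{\mathcal{Z}}^{\mathcal{Y}}(X)=\inf\{\lambda_{\mathcal{Z}_{\fp}}^{\mathcal{Y}_{\fp}}(X_{\fp})\mid\fp\in\Spec R\}$, which is a purely formal consequence of the identifications $(R_{\fp})_{\fq R_{\fp}}=R_{\fq}$ and $\h(\fp'R_{\fp}/\fq R_{\fp})=\h(\fp'/\fq)$: indeed, $\lambda_{\mathcal{Z}_{\fp}}^{\mathcal{Y}_{\fp}}(X_{\fp})$ is the infimum of the quantities defining $\lambda_{\mathcal{Z}}^{\mathcal{Y}}(X)$ restricted to pairs with $\fq\subseteq\fp'\subseteq\fp$, so one inequality is automatic, and taking $\fp=\fp'$ in any pair contributing to $\lambda_{\mathcal{Z}}^{\mathcal{Y}}(X)$ produces the matching lower bound. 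Since the Gorenstein assumption descends to every localization (if $R=G/I$ with $G$ Gorenstein and finite-dimensional, then $R_{\fp}=G_{\fq}/I_{\fq}$ retains the same property), applying the just-established equality $f=\lambda$ locally gives $f_{\mathcal{Z}_{\fp}}^{\mathcal{Y}_{\fp}}(X_{\fp})=\lambda_{\mathcal{Z}_{\fp}}^{\mathcal{Y}_{\fp}}(X_{\fp})$, and chaining these equalities yields the formula. I do not anticipate a single serious obstacle; the only delicate points are the closure of $F(\mathcal{Y})$ under finite products in part (i), and the careful translation between primes of $R_{\fp}$ and primes of $R$ contained in $\fp$ in the localization argument.
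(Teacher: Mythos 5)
Your proof is correct and uses the same engine as the paper, namely Theorem~\ref{M} in both directions, but you organize the localization formula somewhat differently. The paper proves $f_{\mathcal{Z}}^{\mathcal{Y}}(X)\leq\inf_{\fp}f_{\mathcal{Z}_{\fp}}^{\mathcal{Y}_{\fp}}(X_{\fp})$ directly from the commutation of local cohomology with localization (Corollary~\ref{E}(i)), and obtains the reverse inequality by first applying the (i)$\Rightarrow$(ii) direction of Theorem~\ref{M} over each $R_{\fp}$ to verify the global depth condition, then applying (ii)$\Rightarrow$(i) over $R$; the quantity $\lambda$ is never manipulated on its own. You instead isolate the purely combinatorial identity $\lambda_{\mathcal{Z}}^{\mathcal{Y}}(X)=\inf_{\fp}\lambda_{\mathcal{Z}_{\fp}}^{\mathcal{Y}_{\fp}}(X_{\fp})$, which is indeed a formal consequence of $(R_{\fp})_{\fq R_{\fp}}\cong R_{\fq}$, $\h(\fp' R_{\fp}/\fq R_{\fp})=\h(\fp'/\fq)$, and the observation that every admissible pair $(\fq,\fp')$ for $\lambda_{\mathcal{Z}}^{\mathcal{Y}}(X)$ reappears for $\lambda_{\mathcal{Z}_{\fp'}}^{\mathcal{Y}_{\fp'}}(X_{\fp'})$; you then chain $f=\lambda$ globally and at each localization. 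This reorganization is slightly cleaner in that it separates the set-theoretic bookkeeping about $\lambda$ from the homological content of Theorem~\ref{M}, at the modest cost of having to verify that the hypotheses of Theorem~\ref{M} descend to $R_{\fp}$, which you do correctly (the Gorenstein presentation localizes, and $X_{\fp}\in\mathrm{D}^{f}_{\Box}(R_{\fp})$). Your remarks on the closure of $F(\mathcal{Y})$ under finite products and on the $f=+\infty$ edge case in part (i) are also correct, and they fill in small details the paper leaves implicit.
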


\begin{proof} (i) follows by the implication (i)$\Longrightarrow$(ii) in Theorem \ref{M}.

(ii) The first assertion of (ii) follows by Theorem \ref{M}.

Denote $\inf \left\{f_{\mathcal{Z_{\fp}}}^{\mathcal{Y_{\fp}}}(X_{\fp})|\   \fp \in \Spec R\right\}$ by $t$. For every prime ideal
$\fp$ of $R$, Corollary \ref{E} (i) easily yields that $f_{\mathcal{Z}}^{\mathcal{Y}}(X)\leq f_{\mathcal{Z_{\fp}}}^{\mathcal{Y_{\fp}}}
(X_{\fp})$, and so $f_{\mathcal{Z}}^{\mathcal{Y}}(X)\leq t$. Let $n$ be any integer with $n<t$ and $\fp$ be a prime ideal of $R$.
As $n<f_{\mathcal{Z_{\fp}}}^{\mathcal{Y_{\fp}}}(X_{\fp})$, it turns out that there exists $\fc\in F(\mathcal{Y}_{\fp})$ such that
$\fc\H_{{\mathcal{Z}}_{\fp}}^i(X_{\fp})=0$ for all $i\leq n$.  Hence, by Theorem \ref{M}, for every $\fq\notin\mathcal{Y}$
and every $\fp\in \mathcal{Z}\cap \V(\fq)$, one can deduce that$$\depth_{R_{\fq}} X_{\fq}+\h{\fp}/{\fq}> n.$$ Thus, we can apply
Theorem \ref{M} again to deduce that there exists $\fa\in F(\mathcal{Y})$ such that $\fa\H_{{\mathcal{Z}}}^i(X)=0$ for all $i\leq n$.
Therefore $f_{\mathcal{Z}}^{\mathcal{Y}}(X)>n$, and so $f_{\mathcal{Z}}^{\mathcal{Y}}(X)\geq t$.

\end{proof}



\begin{thebibliography}{99}

\bibitem[AKS]{AKS}{J. Asadollahi, K. Khashyarmanesh and Sh. Salarian}, \emph{Local-global principle
for annihilation of general local cohomology}, Colloq. Math., $\textbf{87}$(1), (2001), 129-136.

\bibitem[BRS]{BRS}{M.P. Brodmann, Ch. Rotthaus and R.Y. Sharp}, \emph{On annihilators and associated
primes of local cohomology modules}, J. Pure Appl. Algebra, $\textbf{153}$(3), (2000), 197-227.

\bibitem[Ch]{Ch}{L.W. Christensen}, {\it Gorenstein dimensions}, Lecture Notes in Mathematics, $\textbf{1747}$,
Springer-Verlag, Berlin, 2000.

\bibitem[Fa1]{Fa1}{G. Faltings}, \emph{Der Endlichkeitssatz in der lokalen Kohomologie}, Math. Ann.,
$\textbf{255}$(1), (1981),  45-56.

\bibitem[Fa2]{Fa2}{G. Faltings}, \emph{\"{U}ber die Annulatoren lokaler Kohomologiegruppen}, Arch. Math.
(Basel),  $\textbf{30}$(5), (1978), 473-476.

\bibitem[Fo]{Fo}{H-B. Foxby}, \emph{A homological theory of complexes of modules}, Preprint Series no.
19 a \& 19 b, Department of Mathematics, University of Copenhagen, 1981.

\bibitem[FI]{FI}{H-B. Foxby and S. Iyengar}, \emph{Depth and amplitude for unbounded complexes},
Commutative algebra (Grenoble/Lyon, 2001), 119-137, Contemp. Math., $\textbf{331}$, Amer. Math. Soc.,
Providence, RI,  (2003).

\bibitem[Ha]{Ha}{R. Hartshorne}, {\it Residues and duality}, Lecture Notes in Mathematics, {\bf 20},
Springer-Verlag, Berlin-New York, 1966.

\bibitem[He]{He}{J. Herzog}, {\it Komplex Aufl\"{o}sungen und Dualit\"{a}t in der lokalen Algebra},
Habilitationsschrift, Universit\"{a}t Regensburg, 1974.

\bibitem[Iy]{Iy}{S. Iyengar}, \emph{Depth for complexes, and intersection theorems}, Math. Z., $\textbf{230}$(3),
(1999), 545-567.

\bibitem[Ka]{Ka}{T. Kawasaki}, \emph{On Faltings' annihilator theorem}, Proc. Amer. Math. Soc.,
$\textbf{136}$(4), (2008), 1205-1211.

\bibitem[KS]{KS}{K. Khashyarmanesh and Sh. Salarian}, \emph{Faltings' theorem for the annihilation
of local cohomology modules over a Gorenstein ring}, Proc. Amer. Math. Soc., $\textbf{132}$(8), (2004),
2215-2220.

\bibitem[KYA]{KYA}{K. Khashyarmanesh, M. Yassi and A. Abbasi}, \emph{A new version of local-global principle
for annihilations of local cohomology modules}, Colloq. Math., $\textbf{100}$(2), (2004),  213-219.

\bibitem[Li]{Li}{J. Lipman}, \emph{Lectures on local cohomology and duality,} Local cohomology and its
applications (Guanajuato, 1999), Lecture Notes in Pure and Appl. Math., $\textbf{226}$, Dekker, New York,
(2002), 39-89.

\bibitem[Ra]{Ra}{K.N. Raghavan}, \emph{ Local-global principle for annihilation of local cohomology},
Contemp. Math., $\textbf{159}$, (1994), 329-331.

\bibitem[TYY]{TYY}{R. Takahashi, Y. Yoshino and T. Yoshizawa}, \emph{Local cohomology based on a
nonclosed support defined by a pair of ideals}, J. Pure Appl. Algebra,  $\textbf{213}$(4), (2009), 582-600.

\bibitem[YY]{YY}{Y. Yoshino and T. Yoshizawa}, \emph{Abstract local cohomology functors}, Math. J. Okayama
Univ., $\textbf{53}$, (2011), 129-154.

\end{thebibliography}
\end{document}